\title{When does NIP transfer from fields to henselian expansions?}
\author{Franziska Jahnke} 
\address{Institut f\"ur Mathematische Logik\\Einsteinstr. 62\\48149 M\"unster, 
Germany}
\email{franziska.jahnke@uni-muenster.de}
\thanks{}
\newtheorem{Thm}{Theorem}[section]
\newtheorem{Ex}[Thm]{Example}
\newtheorem*{Def}{Definition}
\newtheorem*{Thm*}{Theorem}
\newtheorem{Cor}[Thm]{Corollary}
\newtheorem{Fact}[Thm]{Fact}
\newtheorem{Prop}[Thm]{Proposition}
\newtheorem{Q}[Thm]{Question}
\newtheorem{Qs}[Thm]{Questions}
\begin{document}
\begin{abstract} 
Let $K$ be an NIP field and let $v$ be a henselian valuation on $K$. We ask whether $(K,v)$ is NIP as a valued field. 
By a result of Shelah, we know that if $v$ is externally definable, then $(K,v)$ is NIP. 
Using the definability of the canonical $p$-henselian valuation, we show that whenever the residue field of $v$ is not separably closed, then $v$ is externally 
definable. In the case of separably closed residue field, we show that $(K,v)$ is NIP as a pure valued field.
\end{abstract}

\maketitle
\section{Introduction and Motivation}
There are many open questions connecting NIP and henselianity, most prominently
\begin{Q}
\begin{enumerate}
\item Is any valued NIP field $(K,v)$ henselian?
\item Let $K$ be an NIP field, neither separably closed nor real closed. Does $K$ admit a definable non-trivial
henselian valuation?
\end{enumerate}
\end{Q}
Both of these questions have been recently answered positively in the special case where `NIP' is
replaced with `dp-minimal' (cf. Johnson's results in \cite{Joh15}). Moreover, Johnson also showed that question (1) can be answered affirmatively when the characteristic of $K$ is positive and showed a positive answer to
question (2) when `NIP' is replaced by `dp-finite of positive characteristic' (\cite{Joh19}).

The question discussed here is the following:
\begin{Q} \label{mainq}
Let $K$ be an NIP field (in an expansion of the language of rings)
and $v$ a henselian valuation on $K$. Is $(K,v)$ NIP?
\end{Q}
Note that this question neither implies nor is implied by any of the above questions, it does however follow along the same lines
aiming to find out how close the bond between NIP and henselianity really is.
 
The first aim of this article is to show that the answer to Question 
\ref{mainq} is `yes' if $Kv$ is not separably closed:
{
\renewcommand{\theThm}{A}
\begin{Thm} \label{A}
Let $(K,v)$ be henselian and such that $Kv$ is not separably closed.
Then $v$ is definable in the Shelah expansion $K^\mathrm{Sh}$.
\end{Thm}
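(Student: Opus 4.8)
The plan is to exhibit a henselian valuation $w$ on $K$ that is $\emptyset$-definable in the pure field language and of which $v$ is a coarsening; from such a $w$ one recovers $\mathcal{O}_v$ inside $K^{\mathrm{Sh}}$ by a short saturation argument. The valuation $w$ will be, after a harmless base change, the canonical $p$-henselian valuation $v_K^p$ for a suitable prime $p$; the crucial ingredient is that $v_K^p$ is uniformly $\emptyset$-definable.

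First I would fix the prime and reduce to a convenient case. Since $Kv$ is not separably closed, it admits a Galois extension of prime degree $p$ for some $p$ (e.g.\ because the absolute Galois group of a field that is not separably closed is never perfect, hence has a quotient isomorphic to $\mathbb{Z}/p\mathbb{Z}$); fix such a $p$, so that $Kv$ is not $p$-closed. To be entitled to invoke the uniform $\emptyset$-definability of $v_K^p$ I want $\zeta_p\in K$ (and, for $p=2$, $\zeta_4\in K$ or $\operatorname{char}K=2$). If $p$ is odd and $\zeta_p\notin K$, I replace $K$ by $K':=K(\zeta_p)$ and $v$ by its unique henselian extension $v'$: here $[K':K]\mid p-1$ is prime to $p$, so a degree-$p$ Galois extension of $Kv$ survives base change to $K'v'$ and hence $K'v'$ is again not $p$-closed; and since $K'$ is interpretable in $K$ and $\mathcal{O}_v=\mathcal{O}_{v'}\cap K$, external definability of $v'$ in $K'$ descends to external definability of $v$ in $K$. (For $p=2$ and $\zeta_4\notin K$ one uses the same base change when $Kv(\zeta_4)$ remains not $2$-closed, and otherwise the ``formally real'' branch of the definability theorem.) So I may assume $w:=v_K^p$ is $\emptyset$-definable in $K$.

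The second step is to show $\mathcal{O}_w\subseteq\mathcal{O}_v$, i.e.\ that $v$ is a coarsening of $w$. As $Kv$, and hence $K$, is not $p$-closed — a degree-$p$ Galois extension of $Kv$ lifts through the henselian $v$ — any two $p$-henselian valuations on $K$ are comparable. The valuation $v$ is henselian, so $p$-henselian, with residue field $Kv$, which is not $p$-closed; if $v$ were strictly finer than $w$ it would induce a non-trivial $p$-henselian valuation $u$ on $Kw$ with residue field $Kv$, still not $p$-closed; but in the main case $\operatorname{char}(Kv)\neq p$ (so also $\operatorname{char}(Kw)\neq p$, and $\zeta_p\in K$ reduces to a primitive $p$-th root of unity in $Kw$) the residue field of a $p$-henselian valuation over a $p$-closed field of residue characteristic $\neq p$ is again $p$-closed, by lifting Kummer extensions — a contradiction. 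Hence $v$ coarsens $w$. Writing $\Delta$ for the corresponding convex subgroup of $\Gamma:=w(K^\times)$, we have $\mathfrak{m}_v=\{x\in K:w(x)>\delta\text{ for all }\delta\in\Delta\}$. Now let $M\succ K$ be $|K|^+$-saturated; the formula defining $w$ defines a valuation $w_M$ on $M$ extending $w$, and the partial type over $\Gamma$ expressing ``$y>\delta$ for every $\delta\in\Delta$ and $y\le\gamma$ for every $\gamma\in\Gamma$ with $\gamma>\Delta$'' is finitely satisfiable in $\Gamma$, hence realised by some $\gamma^{*}$ in the value group of $M$. Choosing $\pi\in M^\times$ with $w_M(\pi)=\gamma^{*}$ one checks that $\pi\mathcal{O}_{w_M}\cap K=\mathfrak{m}_v$; thus $\mathfrak{m}_v$, and therefore $\mathcal{O}_v$, is externally definable, i.e.\ definable in $K^{\mathrm{Sh}}$.

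The step I expect to be the genuine obstacle is the case $\operatorname{char}(Kv)=p$ — equivalently, the case in which every prime $q$ for which $Kv$ fails to be $q$-closed equals $\operatorname{char}(Kv)$. Then the Kummer-lifting argument of the second step is unavailable (indeed in mixed characteristic $v$ may fail to be a coarsening of $v_K^p$), the reduction to $\zeta_p\in K$ has to be arranged by hand, and one must invoke the residue-characteristic-$p$ form of the definability of $v_K^p$ together with a closer analysis of the (possibly wild) ramification of $v$ relative to it in order still to extract, from a definable $p$-henselian valuation, a definition of $v$ in $K^{\mathrm{Sh}}$.
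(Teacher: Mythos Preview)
Your overall strategy --- pass to a finite extension to ensure $\zeta_p\in K$, use the $\emptyset$-definable canonical $p$-henselian valuation as a refinement of $v$, then recover $\mathcal{O}_v$ as a coarsening via the Shelah expansion --- is exactly the paper's approach in Proposition~\ref{notreal}, and your saturation argument for the final step is precisely the content of Example~\ref{mainex}. The paper's construction is slightly more elaborate (a Sylow argument arranging $p^2\mid[k:Kv]$, so that after adjoining $\sqrt{-1}$ the residue field is still not $2$-closed), but the architecture is the same.

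However, you have misidentified where the difficulty lies. The case $\operatorname{char}(Kv)=p$ that you flag as the ``genuine obstacle'' is not one. You do not need Kummer-lifting to show $\mathcal{O}_{v_K^p}\subseteq\mathcal{O}_v$: since $Kv\neq Kv(p)$, the valuation $v$ lies in $H_1^p(K)$, and by the basic structure theory of $p$-henselian valuations every element of $H_1^p(K)$ is a coarsening of $v_K^p$. This holds in all characteristic configurations; your parenthetical ``in mixed characteristic $v$ may fail to be a coarsening of $v_K^p$'' is simply false when $Kv$ is not $p$-closed. (Incidentally, your claim that ``any two $p$-henselian valuations on $K$ are comparable'' is wrong --- valuations in $H_2^p(K)$ need not be comparable --- but every $p$-henselian valuation is comparable to $v_K^p$, which is what you actually need.)

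The real gap in your proposal is the case where $Kv$ is \emph{real closed}. There the only prime with $Kv\neq Kv(p)$ is $p=2$, and $Kv(\sqrt{-1})$ is algebraically closed, so your base-change trick does not help and your reference to ``the formally real branch of the definability theorem'' is not an argument. The paper treats this case separately (Proposition~\ref{real}) by a genuinely different idea: first reduce to $K$ Euclidean using the $\emptyset$-definable valuation $v_K^{2*}$ from Theorem~\ref{JK14}; on a Euclidean field the ordering is $\mathcal{L}_{\mathrm{ring}}$-definable, so the convex hull of $\mathbb{Z}$ is externally definable; this convex hull is the finest $2$-henselian valuation on $K$, hence refines $v$, and one concludes via Example~\ref{mainex} as before. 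Your write-up needs to incorporate this argument (or an equivalent one) to be complete.
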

}
See section \ref{Sh} for the definition of $K^\mathrm{Sh}$.
The theorem follows immediately from combining Propositions \ref{notreal} and \ref{real}.  
If $v$ is definable in $K^\mathrm{Sh}$, then one can add a symbol for the valuation ring $\mathcal{O}$ to any language 
$\mathcal{L}$ extending $\mathcal{L}_\textrm{ring}$ and obtain that if $K$ is NIP as an $\mathcal{L}$-structure, then
$(K,v)$ is NIP as an $\mathcal{L} \cup \{\mathcal{O}\}$-structure.
Theorem \ref{A} is proven using the definability of the canonical $p$-henselian valuation. We make a case distinction between
when $Kv$ is neither separably closed nor real closed 
(Proposition \ref{notreal}) and when $Kv$ is real closed (Proposition
\ref{real}).

On the other hand, 
if $Kv$ is separably closed, then we cannot hope for a result in the same generality: 
it is well-known that 
any
algebraically closed valued field is NIP in $\mathcal{L}_\textrm{ring} \cup \{\mathcal{O}\}$, however,
any algebraically closed field with two independent valuations has IP (\cite[Theorem 6.1]{Joh13}, see also Example \ref{extra}).
In this case, we can still consider the question in the language of rings: given an NIP field $K$ and a henselian valuation $v$
on $K$, is $(K,v)$ NIP in $\mathcal{L}_\textrm{ring}\cup\{\mathcal{O}\}$? 
The answer to this is again positive:
{\renewcommand{\theThm}{B}
\begin{Thm} \label{B}
Let $K$ be NIP, $v$ henselian on $K$.
Then $(K,v)$ is NIP as
a pure valued field. 
\end{Thm}}
Theorem \ref{B} is proven as Theorem \ref{main2} in section \ref{scf}. The proof of the theorem uses two NIP transfer theorems 
recently proven in \cite{JS} and \cite{AJ19b}. A transfer theorem gives criteria under which dependence of the residue field
implies dependence of the (pure) valued field. Delon proved a transfer theorem for henselian valued fields
of equicharacteristic $0$ (see \cite{Del} or {\cite[Theorem A.15]{Simon:book}}), and
B\'elair proved 
a version for equicharacteristic Kaplansky fields which are algebraically maximal (see \cite{Bel}).
The transfer theorems proven in \cite[Theorem 3.3]{JS} and \cite[Proposition 4.1]{AJ19b} generalizes these results to separably algebraically maximal 
Kaplansky fields, in particular, they also works in mixed characteristic. See section \ref{scf} for definitions and more details.
Combining these transfer theorems with an idea of Scanlon and some standard trickery concerning definable valuations 
yields that under the assumptions of Theorem \ref{B}, we can find a decompostion of $v = \bar{v} \circ w$
into two NIP valuations.
The
question whether the composition of two henselian NIP valuations is again NIP seems to be open.
For the case when the residue field of the coarser valuation is stably embedded, this follows from \cite[Proposition 2.5]{JS}.
Using that the residue field in separably algebraically maximal Kaplansky fields and unramified henselian fields is
stably embedded, 
this allows us to prove the second part of Theorem \ref{B}.

The paper is organized as follows: In section \ref{nscf}, we first recall the necessary background concerning the Shelah expansion.
We then discuss the definition and definability of the canonical $p$-henselian valuation. In the final part, we use these two
ingredients to prove Theorem \ref{A}. In particular, we conclude that for any henselian NIP field the residue
field is NIP as a pure field. We also obtain as a consequence that if a field admits a non-trivial henselian valuation
and is NIP in some $\mathcal{L} \supseteq \mathcal{L}_\mathrm{ring}$, then there is \emph{some} 
non-trivial henselian valuation $v$ on $K$
such that $(K,v)$ is NIP in $\mathcal{L} \cup \{O\}$ (Corollary \ref{exist}).

In the third section, we treat the case of separably closed residue fields. We first recall an example which
shows that we have to restrict Question \ref{mainq} to the language of pure valued fields.
We then briefly review different ingredients, starting with the transfer theorem for separably algebraically maximal Kaplansky
fields.
After quoting a result by Delon that separably closed valued fields are NIP, 
we state and prove a Proposition by Scanlon (Proposition \ref{scan}) which implies
that on an NIP field, any valuation with non-perfect residue field is $\mathcal{L}_\mathrm{ring}$-definable. We then recall
some facts about stable embeddedness and the composition of NIP valuations.
In the final subsection, we prove Theorem \ref{B}.

Finally, in section \ref{ord}, we treat the simpler case of convex valuation rings on an ordered field $(K,v)$. As any
convex valuation ring is definable in $(K,<)^\mathrm{Sh}$, we conclude that if $(K,<)$ is an ordered NIP field in some
language $\mathcal{L} \supseteq \mathcal{L}_\mathrm{ring} \cup \{<\}$ and $v$ is a convex valuation on $K$,
then $(K,v)$ is NIP in $\mathcal{L} \cup \{\mathcal{O}\}$ (Corollary \ref{cord}).

Throughout the paper, we use the following notation: 
for a valued field $(K,v)$, we write $vK$ for the value group, $Kv$ for the residue field
and $\mathcal{O}_v$ for the valuation ring of $v$.

\section{Non-separably closed residue fields} \label{nscf}
\subsection{Externally definable sets} \label{Sh}
Throughout the subsection, let $M$ be a structure in some language $\mathcal{L}$.
\begin{Def}
Let $N \succ M$ be an $|M|^+$-saturated elementary extension.
A subset $A \subseteq M$ is called \emph{externally definable} if it is of the form
$$\{a \in M^{|\bar{x}|}\,|\,N \models \varphi(a,b)\}$$
for some $\mathcal{L}$-formula $\varphi(\bar{x},\bar{y})$ and some $b \in N^{|\bar{y}|}$. 
\end{Def}
The notion of externally definable sets does not depend on the choice of $N$. See {\cite[Chapter 3]{Simon:book}}
for more details on externally definable sets.

\begin{Def}
The \emph{Shelah expansion} $M^\mathrm{Sh}$ is the expansion of $M$ by predicates for all externally definable sets.
\end{Def}

Note that the Shelah expansion behaves well when it comes to NIP:
\begin{Prop}[Shelah, {\cite[Corollary 3.14]{Simon:book}}]
If $M$ is NIP then so is $M^\mathrm{Sh}$. \label{Shelah}
\end{Prop}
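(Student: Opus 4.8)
The plan is to derive this from the theory of honest definitions in NIP theories, following the treatment in \cite[Chapter 3]{Simon:book}. Fix a monster model $\mathfrak{C}\succeq M$. Recall that the externally definable subsets of $M^n$ are exactly the traces $\varphi(M^n,b):=\{a\in M^n:\mathfrak{C}\models\varphi(a,b)\}$ with $\varphi\in\mathcal{L}$ and $b\in\mathfrak{C}$, and that this collection does not depend on the chosen elementary extension. The goal is to show that every $\mathcal{L}^{\mathrm{Sh}}$-formula is NIP in $M^{\mathrm{Sh}}$. The naive strategy --- replace every occurrence of an external predicate $P_{\varphi(M,b)}(\bar{t})$ by the $\mathcal{L}$-formula $\varphi(\bar{t},b)$, so that a quantifier-free $\mathcal{L}^{\mathrm{Sh}}$-formula turns into an $\mathcal{L}$-formula with extra parameters in $\mathfrak{C}$, and then transfer any IP pattern down to $T:=\operatorname{Th}(M)$ --- works verbatim as long as the external predicates occur outside the scope of any quantifier over the home sort, but fails once they occur inside such a quantifier, since an existential witness found in $\mathfrak{C}$ need not lie in $M$. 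Overcoming this is the heart of the matter.

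The tool is the existence of honest definitions (Chernikov--Simon): since $T$ is NIP, for every $\mathcal{L}$-formula $\varphi(\bar{x},\bar{y})$ and every $b\in\mathfrak{C}$ there are an $\mathcal{L}$-formula $\psi(\bar{x},\bar{z})$ and a tuple $d$ with $\operatorname{tp}(d/Mb)$ finitely satisfiable in $M$ such that $\varphi(M,b)\subseteq\psi(\mathfrak{C},d)\subseteq\varphi(\mathfrak{C},b)$. In particular $\psi(M,d)=\varphi(M,b)$, so $\psi(\bar{x},d)$ defines the external predicate $P_{\varphi(M,b)}$ over a (larger) elementary extension of $M$ and, crucially, does so without ``overshooting'' $\varphi(\bar{x},b)$ anywhere in $\mathfrak{C}$.

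Using this, I would show by induction on $\mathcal{L}^{\mathrm{Sh}}$-formulas that every subset of $M^n$ definable in $M^{\mathrm{Sh}}$ with parameters from $M$ is again externally definable in the $\mathcal{L}$-structure $M$; that is, the Shelah expansion is stable under iteration, $(M^{\mathrm{Sh}})^{\mathrm{Sh}}$ being interdefinable with $M^{\mathrm{Sh}}$. Atomic external predicates are externally definable by definition; $\mathcal{L}$-atomic formulas and Boolean combinations are absorbed since the externally definable sets form a Boolean algebra; and in the existential step one first uses the induction hypothesis and the previous paragraph to replace the defining $\mathcal{L}$-formula of the relevant $(n{+}1)$-ary set by an honest definition $\psi(\bar{x},y,d)$, and then projects off the last coordinate: the left-hand inclusion of the honest definition keeps the projection large enough, while the ``honest'' right-hand inclusion $\psi(\mathfrak{C},d)\subseteq\varphi(\mathfrak{C},b)$ keeps it from acquiring spurious points, so that the $M$-projection coincides with the trace of $\exists y\,\psi(\bar{x},y,d)$ and is therefore externally definable.

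Granting this stability statement, any $\mathcal{L}^{\mathrm{Sh}}$-formula $\theta(\bar{x},\bar{y})$ defines over $M^{\mathrm{Sh}}$ a subset $S\subseteq M^{|\bar{x}|+|\bar{y}|}$ that is the trace $\chi(M^{|\bar{x}|+|\bar{y}|},e)$ of an $\mathcal{L}$-formula $\chi(\bar{x},\bar{y},\bar{z})$ over a fixed parameter $e$ in some elementary extension $N_0\succeq M$. A finite subset of $M^{|\bar{x}|}$ shattered by $\theta$ in $M^{\mathrm{Sh}}$ is then, via this description, shattered in $N_0$ by $\chi(\bar{x};\bar{y},\bar{z})$ using parameters of the form $(\bar{b}_s,e)$; since $T$ is NIP this cannot happen for arbitrarily large finite sets, so $\theta$ is NIP in $M^{\mathrm{Sh}}$. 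I expect the existential step of the induction to be the main obstacle: making precise why the ``honest'' containment is exactly the bookkeeping the projection requires, and keeping track of the fact that the coheir parameters produced by successive applications of honest definitions live in ever-larger elementary extensions --- harmless, since the externally definable sets do not depend on the choice of extension, but a point that must be addressed.
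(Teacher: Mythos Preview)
The paper does not prove this proposition at all; it is quoted as a black box from \cite[Corollary 3.14]{Simon:book}. Your outline is precisely the argument given there (quantifier elimination for $M^{\mathrm{Sh}}$ via honest definitions, then reading off NIP), so there is nothing to compare on the level of strategy.

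One sharpening is worth flagging, since you single out the existential step as the crux. Your heuristic ``the right-hand inclusion $\psi(\mathfrak{C},d)\subseteq\varphi(\mathfrak{C},b)$ keeps the projection from acquiring spurious points'' is not quite enough on its own: from $\mathfrak{C}\models\exists y\,\psi(\bar a,y,d)$ the containment only yields a witness $c\in\mathfrak{C}$ with $\varphi(\bar a,c,b)$, and you still need one in $M$. This is exactly where the coheir hypothesis on $d$ does work that the containment cannot. One clean way: the $\mathcal{L}(Mb)$-formula
\[
\theta(\bar z)\;:=\;\exists y\,\psi(\bar a,y,\bar z)\ \wedge\ \forall y\bigl(\psi(\bar a,y,\bar z)\to\varphi(\bar a,y,b)\bigr)
\]
holds at $d$, so by finite satisfiability in $M$ it holds at some $d'\in M$; now the existential witness can be taken in $M$, and the second conjunct pushes it into $\varphi(M,b)$. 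Equivalently, and this is how it is usually packaged, one works in an elementary extension $(N',M')\succ(N,M)$ of the pair and uses that $M'\prec N'$ together with elementarity of the pair to transfer the witness down to $M$. Either way your ingredients are correct and your anticipated obstacle is exactly the point that needs this extra sentence.
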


The way the Shelah expansion is used in this paper is to show that any coarsening of a definable valuation on an NIP field
is an NIP valuation. Thus, the following example is crucial:
\begin{Ex} \label{mainex}
Let $(K,w)$ be a valued field and $v$ be a coarsening of $w$, i.e., a valuation
on $K$ with $\mathcal{O}_v \supseteq \mathcal{O}_w$. 
Then, there is a convex subgroup $\Delta \leq wK$
such that we have $vK \cong wK/\Delta$. As $\Delta$ is externally definable in the ordered abelian group $wK$,
the valuation ring $\mathcal{O}_v$ is definable in $(K,w)^\mathrm{Sh}$.
\end{Ex}

\subsection{$p$-henselian valuations}
Throughout this subsection, let $K$ be a field and $p$ a prime. We recall the main properties of the canonical $p$-henselian valuation
on $K$.
We define $K(p)$ to be the compositum of all Galois extensions of $K$ of 
$p$-power degree (in a fixed algebraic closure). Note that we have
\begin{itemize}
\item $K \neq K(p)$ iff $K$ admits a Galois extension of degree $p$ and
\item if $[K(p):K]<\infty$ then $K=K(p)$ or $p=2$ and $K(2)=K(\sqrt{-1})$ (see \cite[Theorem 4.3.5]{EP05}).
\end{itemize} 
A field $K$ which admits exactly one Galois extension of $2$-power degree is called \emph{Euclidean}. Any 
Euclidean field is uniquely ordered, the positive elements being exactly the squares (see \cite[Proposition 4.3.4 and Theorem 4.3.5]{EP05}). 
In particular, the ordering on a Euclidean field is $\mathcal{L}_\mathrm{ring}$-definable.

\begin{Def}
A valuation $v$ on a field $K$ is called
\emph{$p$-henselian} if $v$ extends uniquely to $K(p)$.
We call $K$ \emph{$p$-henselian} if $K$ 
admits a non-trivial
$p$-henselian valuation. 
\end{Def}

Every henselian valuation is $p$-henselian for all primes $p$.
Assume $K \neq K(p)$. Then, there is a canonical
$p$-henselian valuation on $K$:
We divide the class of 
$p$-henselian valuations on $K$
 into
two subclasses,
$$H^p_1(K) = \{v\; p\textrm{-henselian on } K \,| \,Kv \neq Kv(p)\}$$
and
$$H^p_2(K) = \{ v\; p\textrm{-henselian on } K \,|\, Kv = Kv(p) \}.$$ 
One can show that any valuation $v_2 \in H^p_2(K)$ 
is \emph{finer} than any $v_1 \in H^p_1(K)$, i.e. 
${\mathcal O}_{v_2} \subsetneq {\mathcal O}_{v_1}$,
and that any two valuations in $H^p_1(K)$ are comparable.
Furthermore, if $H^p_2(K)$ is non-empty, then there exists a unique coarsest
valuation
$v_K^p$ in $H^p_2(K)$; otherwise there exists a unique finest 
valuation $v_K^p \in H^p_1(K)$.
In either case, $v_K^p$ is called the \emph{canonical $p$-henselian valuation} (see \cite{Koe95} for more details).

The following properties of the canonical $p$-henselian valuation follow immediately from the definition:
\begin{itemize}
\item If $K$ is $p$-henselian then $v_K^p$ is non-trivial.
\item Any $p$-henselian valuation on $K$ is comparable to $v_K^p$.
\item If $v$ is a $p$-henselian valuation on $K$ with $Kv \neq Kv(p)$, then $v$ coarsens $v_K^p$.
\item If $p=2$ and $Kv_K^2$ is Euclidean, then there is a (unique) $2$-henselian valuation $v_K^{2*}$ such that
$v_K^{2*}$ is the coarsest $2$-henselian valuation with Euclidean residue field.
\end{itemize}

\begin{Thm}[{\cite[Corollary 3.3]{JK14}}] \label{JK14}
Let $p$ be a prime and 
consider the (elementary) class of fields
$$\mathcal{K} = \{ K \,|\, K \;p\textrm{-henselian, with }\zeta_p \in K
\textrm{ in case } \mathrm{char}(K)\neq p\}$$
There is a parameter-free $\mathcal{L}_\textrm{ring}$-formula $\psi_p(x)$
such that 
\begin{enumerate}
\item if $p \neq 2$ or $Kv_K^2$ is not Euclidean, then $\psi_p(x)$ defines 
the valuation ring of the canonical
$p$-henselian valuation $v_K^p$, and
\item if $p=2$ and $Kv_K^2$ is Euclidean, then $\psi_p(x)$ defines the valuation
ring of the
coarsest $2$-henselian valuation $v_K^{2*}$ such that
$Kv_K^{2*}$ is Euclidean. 
\end{enumerate}
\end{Thm}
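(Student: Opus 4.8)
The plan is to recover the canonical $p$-henselian valuation from the first-order theory of $p$-th powers, following the rigid-element approach. Since $\mathcal{K}$ contains a field of characteristic $\neq p$ only when $\zeta_p \in K$, Kummer theory identifies $K^\times/(K^\times)^p$ with $H^1(G_K,\mathbb{Z}/p)$ in that case, and one may work throughout with the parameter-free definable relation ``$y \in x\cdot (K^\times)^p$''. Call $t \in K^\times$ \emph{$p$-rigid} if every nonzero element of $1 + t\cdot (K^\times)^p$ already lies in $(K^\times)^p \cup t\cdot (K^\times)^p$; this is visibly a parameter-free $\mathcal{L}_\mathrm{ring}$-condition on $t$. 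The first step is to determine, for a $p$-henselian $(K,v) \in \mathcal{K}$, which elements are $p$-rigid: using $p$-henselianity one shows that $p$-rigidity of $t$ is controlled by the value $v(t)$ modulo $p\,vK$ (with only a bounded contribution coming from the units), so that the $p$-rigid elements, read modulo $(K^\times)^p$, capture precisely the ramification quotient $vK/p\,vK$ inside $K^\times/(K^\times)^p$. In residue characteristic $p$ the analogous analysis must be carried out with the additive Artin--Schreier map $\wp(x)=x^p-x$ replacing $x \mapsto x^p$.

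The second step is to promote this into a uniform definition of a valuation ring. From the set of $p$-rigid elements one forms the subgroup $H \le K^\times$ they generate together with $(K^\times)^p$, and then carves out a candidate valuation ring by a standard construction testing membership of $x$ and of $1-x$ against $H$. One checks that this set is a valuation ring $\mathcal{O}$, that it is $p$-henselian, and --- the crucial point --- that $\mathcal{O}$ is the valuation ring of $v_K^p$. The last step uses the structure theory recalled above: every $p$-henselian valuation is comparable to $v_K^p$, the class $H^p_1(K)$ is linearly ordered, and $H^p_2(K)$, when nonempty, has a coarsest element. This comparison analysis is also where the case distinction of the statement comes from: generically the construction pins down $v_K^p$ exactly, but when $p=2$ and $Kv_K^2$ is Euclidean the rigid-element data cannot resolve anything below the coarsest $2$-henselian valuation with Euclidean residue field, and $\psi_p$ is then arranged to define $v_K^{2*}$.

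The main obstacle is uniformity over all of $\mathcal{K}$, which is pressured from two sides. First, the residue characteristic may or may not equal $p$; the Kummer-theoretic and Artin--Schreier-theoretic analyses yield a priori different conditions, and $\psi_p$ must be written as a single parameter-free formula that is provably equivalent to the correct one on every field of $\mathcal{K}$. Second, possessing ``enough'' $p$-rigid elements does not on its own force $p$-henselianity: there is an exceptional configuration in which the candidate set is not a valuation ring at all, and one must show that, restricted to $\mathcal{K}$, this exceptional case coincides exactly with the Euclidean-residue-field case of clause (2), where $\psi_p$ outputs $v_K^{2*}$ instead. Controlling these two sources of non-uniformity --- and verifying that one fixed formula genuinely works across the class --- is the heart of the argument; the remaining verifications (first-orderness of $p$-rigidity, that the constructed set is a valuation ring, and its compatibility with the canonical valuation) are routine consequences of $p$-henselianity and the structure theory.
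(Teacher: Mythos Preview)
The paper does not prove this statement at all: it is quoted verbatim as \cite[Corollary 3.3]{JK14} and used as a black box, with no proof given in the present text. There is therefore no ``paper's own proof'' to compare your proposal against.

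Your sketch is a reasonable outline of the rigid-element strategy that underlies the cited result, but be aware that what you wrote is a plan rather than a proof: the key technical claims --- that $p$-rigidity captures exactly the ramification data $vK/p\,vK$, that the candidate set built from $H$ is a valuation ring, that it coincides with $\mathcal{O}_{v_K^p}$ (resp.\ $\mathcal{O}_{v_K^{2*}}$), and that a single parameter-free formula handles the Kummer and Artin--Schreier cases simultaneously --- are asserted but not argued. If the intent was merely to indicate the shape of the argument in \cite{JK14}, that is fine; if the intent was to supply an independent proof, the substantive verifications are all still missing.
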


\subsection{External definability of henselian valuations}
In this subsection, we apply the results from the previous two subsections to prove Theorem \ref{A} from the introduction.
\begin{Prop} \label{notreal}
Let $(K,v)$ be henselian such that $Kv$ is neither separably closed nor real closed.
Then $v$ is definable in $K^\mathrm{Sh}$.
\end{Prop}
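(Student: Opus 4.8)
The plan is to reduce the statement to the definability results for the canonical $p$-henselian valuation recalled in Theorem \ref{JK14}, combined with the behaviour of coarsenings in the Shelah expansion (Example \ref{mainex}). The starting point is that since $Kv$ is not separably closed, there is some prime $p$ such that $Kv$ admits a Galois extension of degree $p$, i.e. $Kv \neq Kv(p)$; excluding the real closed case lets us also choose $p$ so that $Kv$ is not Euclidean when $p = 2$, or simply pick an odd such $p$. The issue of roots of unity has to be dealt with: Theorem \ref{JK14} only applies to fields containing $\zeta_p$ (when $\mathrm{char}(K) \neq p$), so I first treat the case $\zeta_p \in K$ and afterwards reduce the general case to it.

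First I would handle the case $\zeta_p \in K$. Since $v$ is henselian it is $p$-henselian, and because $Kv \neq Kv(p)$ the residue field of $v$ "sees" a degree-$p$ Galois extension, so $v \in H^p_1(K)$; hence $v$ coarsens the canonical $p$-henselian valuation $v_K^p$ (using the bullet points listed after the definition of $v_K^p$, together with the fact that $H^p_2(K)$ valuations are finer than $H^p_1(K)$ valuations). By Theorem \ref{JK14}, $\mathcal{O}_{v_K^p}$ is $\mathcal{L}_\mathrm{ring}$-definable — here I use the exclusion of the real closed (Euclidean) case to guarantee we are in clause (1) of that theorem, so that $\psi_p$ genuinely defines $v_K^p$ and not some coarser $2$-henselian valuation. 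Since $v$ is a coarsening of the definable valuation $v_K^p$, Example \ref{mainex} gives that $\mathcal{O}_v$ is definable in $(K, v_K^p)^\mathrm{Sh}$, and as $v_K^p$ is already $\mathcal{L}_\mathrm{ring}$-definable, $(K, v_K^p)^\mathrm{Sh}$ and $K^\mathrm{Sh}$ have the same definable sets; thus $\mathcal{O}_v$ is definable in $K^\mathrm{Sh}$.

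For the general case, suppose $\zeta_p \notin K$ (so $\mathrm{char}(K) \neq p$). Pass to $L = K(\zeta_p)$, a finite extension of degree dividing $p - 1$, hence prime to $p$. One checks that $L$ still has a henselian valuation $v'$ extending $v$ with $Lv'$ not separably closed and still $Lv' \neq Lv'(p)$ (a degree-$p$ Galois extension of $Kv$ survives under a prime-to-$p$ residue extension, or one re-chooses $p$ suitably), and $L = L(p)$ is impossible for the same reason. Apply the previous case inside $L$ to get $\mathcal{O}_{v'}$ definable in $L^\mathrm{Sh}$; then descend: $K$ is interpretable in $L$ via a prime-to-$p$ extension, $L$ is interpretable in $K$, and $\mathcal{O}_v = \mathcal{O}_{v'} \cap K$, so externally definable sets of $L$ restrict to externally definable sets of $K$ — more carefully, since $L$ is a finite extension of $K$ it is interpretable in $K$, and an external definition of $\mathcal{O}_{v'}$ over a saturated extension of $L$ yields one of $\mathcal{O}_v$ over a saturated extension of $K$. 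Hence $\mathcal{O}_v$ is definable in $K^\mathrm{Sh}$.

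The main obstacle I anticipate is the bookkeeping around roots of unity and finite extensions: verifying that after adjoining $\zeta_p$ one can still arrange a prime $p$ with the residue field admitting a degree-$p$ Galois extension but not being Euclidean, and checking cleanly that external definability transfers between $K$ and the finite extension $L = K(\zeta_p)$ (in both directions, at the level of the Shelah expansion). The purely valuation-theoretic core — that $v$ coarsens $v_K^p$ and that coarsenings are externally definable — is essentially immediate from the quoted results. A cleaner route for the root-of-unity problem may be to choose $p$ from the outset among primes not dividing the order of any relevant torsion, or to invoke a version of Theorem \ref{JK14} phrased for $K$ itself rather than $K(\zeta_p)$ if available; but absent that, the finite-extension descent is the natural fallback and is where the care is needed.
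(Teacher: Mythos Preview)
Your overall strategy matches the paper's: produce an $\mathcal{L}_\mathrm{ring}$-definable valuation refining $v$ (possibly after passing to a finite extension), then invoke Example~\ref{mainex}. But there is a genuine gap in the $p=2$ case that you flag but do not resolve. If $Kv$ is Euclidean but not real closed --- which is \emph{not} excluded by hypothesis --- then Theorem~\ref{JK14} defines $v_K^{2*}$ rather than $v_K^2$, and since $v$ itself has Euclidean residue it is a \emph{refinement} of $v_K^{2*}$, not a coarsening, so Example~\ref{mainex} does not apply. Your fallback ``simply pick an odd such $p$'' requires $Kv \neq Kv(p)$ for some odd $p$, i.e.\ that $G_{Kv}$ has a quotient of odd prime order; you do not justify this, and it is not automatic. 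More basically, even your opening claim that \emph{some} prime $p$ satisfies $Kv \neq Kv(p)$ amounts to $G_{Kv}$ having nontrivial abelianization --- a finite quotient of order divisible by $p$ need not have a normal subgroup of index $p$ (think $A_5$) --- and you have not argued this.

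The paper sidesteps both issues with a Sylow argument. It picks $p$ so that $Kv$ has a finite Galois extension $k$ of degree divisible by $p^2$, lifts to a finite Galois $N/K$ with $Nu = k$, takes a $p$-Sylow $P$ of $\mathrm{Gal}(Nu/Kv)$, pulls it back along the surjection $\mathrm{Gal}(N/K)\twoheadrightarrow\mathrm{Gal}(Nu/Kv)$ to $G$, and sets $L = N^G$. Then $\mathrm{Gal}(Nu/Lu)=P$ is a $p$-group of order $\geq p^2$, so $Lu \neq Lu(p)$, and crucially this persists after adjoining $\zeta_p$ (degree $<p$) or, when $p=2$, after adjoining $\sqrt{-1}$ (degree $\leq 2$; the extra factor of $p$ is exactly the room needed). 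With $\sqrt{-1}\in L$ no ordering survives, so no $2$-henselian valuation on $L$ has Euclidean residue and Theorem~\ref{JK14} lands in its clean case~(1). From there one proceeds exactly as you intended: $v_L^p$ is $\mathcal{L}_\mathrm{ring}$-definable in $L$, its restriction to $K$ refines $v$, and Example~\ref{mainex} finishes. The Sylow step is the missing idea: it manufactures, over a controlled finite extension, a residue field with a $p$-power Galois tower of height $\geq 2$, which your direct choice $L=K(\zeta_p)$ cannot guarantee.
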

\begin{proof}
Assume $Kv$ is neither separably closed nor real closed. For any finite separable extension $F$ of $K$, we use $u$ to denote the
(by henselianity unique) extension of $v$ to $F$. 
Choose any prime $p$ such that $Kv$ has a finite Galois extension $k$
of degree divisible by $p^2$. 
Consider a finite Galois extension $N \supseteq K$ such that $Nu=k$. 
Note that such an $N$ exists by \cite[Corollary 4.1.6]{EP05}.
Now, let $P$ be a $p$-Sylow of $\mathrm{Gal}(Nu/Kv)$. Recall that the canonical restriction map
$$\mathrm{res}: \mathrm{Gal}(N/K) \to \mathrm{Gal}(Nu/Kv)$$
is a surjective homomorphism (\cite[Lemma 5.2.6]{EP05}). 
Let $G \leq \mathrm{Gal}(N/K)$ be the preimage of $P$ under this map, and let
$L:=\mathrm{Fix}(G)$ be the intermediate field fixed by $G$. In particular, $L$ is a finite separable extension of $K$.
By construction, the extension $Lu \subseteq Nu$ is a Galois extension of degree $p^n$ for some $n \geq 2$, 
in particular, we have $Lu\neq Lu(p)$.  

Hence, we have constructed some finite separable extension $L$ of $K$ with $Lu\neq Lu(p)$.
Moreover, we may assume that $L$ contains a
primitive $p$th root of unity in case $p\neq 2$ and $\mathrm{char}(K)\neq p$:
The field $L':=L(\zeta_p)$ is again a finite separable extension of $K$ and its residue field is a finite extension of $Lu$. 
Thus, by \cite[Theorem 4.3.5]{EP05}, we get $L'u \neq L'u(p)$. Similarly,  
in case $p = 2$ and $\mathrm{char}(K)=0$, we may assume that $L$ contains a square root of $-1$: By construction,
$Lu$ has a Galois extension of degree $p^n$ for some $n \geq 2$. Consider $L':=L(\sqrt{-1})$, 
then $L'u$ is not $2$-closed and not orderable. 
In this case, no $2$-henselian
valuation on $L'$ has Euclidean residue field (see \cite[Lemma 4.3.6]{EP05}).

Finally, $v_L^p$
is definable on $L$ by a parameter-free $\mathcal{L}$-formula $\varphi_p(x)$.
It follows from the defining properties of $v_L^p$ that $\mathcal{O}_{v_L^p} \subseteq \mathcal{O}_{u}$ holds.
As $L/K$ is finite, $L$ is interpretable in $K$.
Hence, $\mathcal{O}_w:=\mathcal{O}_{v_L^p} \cap K$ is an $\mathcal{L}_\textrm{ring}$-definable valuation ring
of $K$ with $\mathcal{O}_w \subseteq \mathcal{O}_v$. By Example 
\ref{mainex}, $v$ is definable in $K^\mathrm{Sh}$. 
\end{proof}

\begin{Prop} \label{real}
Let $(K,v)$ be henselian such that $Kv$ is real closed.
Then $v$ is definable in $K^\mathrm{Sh}$.
\end{Prop}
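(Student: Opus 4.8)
The plan is to run Theorem~\ref{JK14} for the prime $2$ and to compare the definable valuation it produces with $v$; the new ingredient relative to the proof of Proposition~\ref{notreal} is that, in the one case where the comparison goes the "wrong way", one may pass to a Euclidean residue field and use that convex valuation rings of ordered fields are externally definable.

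First I would discard the case $v$ trivial (then $\mathcal{O}_v = K$ is $\mathcal{L}_\mathrm{ring}$-definable) and record the consequences of $Kv$ being real closed: $\mathrm{char}(K)=0$; the field $K$ is not quadratically closed, since otherwise, by Hensel's lemma, $Kv$ would be too, so $K\neq K(2)$; and $K$ is $2$-henselian, as $v$ is a non-trivial henselian valuation. Hence Theorem~\ref{JK14} applies with $p=2$ and gives a parameter-free $\mathcal{L}_\mathrm{ring}$-formula defining a valuation ring $\mathcal{O}_{w_0}$ on $K$, where $w_0 = v_K^2$ if $Kv_K^2$ is not Euclidean and $w_0 = v_K^{2*}$ if it is. Since $Kv$ is real closed we have $Kv\neq Kv(2)$, so by the listed properties of the canonical $p$-henselian valuation $v$ is a coarsening of $v_K^2$; and $w_0$ is likewise a coarsening of $v_K^2$ --- trivially if $w_0 = v_K^2$, and because in the case $w_0 = v_K^{2*}$ the valuation $v_K^2$ is itself $2$-henselian with Euclidean residue field while $v_K^{2*}$ is the coarsest such. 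Being both coarsenings of $v_K^2$, the valuations $v$ and $w_0$ are comparable, and I would split on this comparison.

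If $\mathcal{O}_{w_0}\subseteq\mathcal{O}_v$, then $v$ is a coarsening of the $\mathcal{L}_\mathrm{ring}$-definable valuation $w_0$, and Example~\ref{mainex} shows immediately that $\mathcal{O}_v$ is definable in $K^\mathrm{Sh}$. The substantive case is $\mathcal{O}_v\subsetneq\mathcal{O}_{w_0}$. Then $w_0\neq v_K^2$ (as $\mathcal{O}_{v_K^2}\subseteq\mathcal{O}_v$), so $w_0 = v_K^{2*}$ and $Kw_0$ is Euclidean; in particular the ordering of $Kw_0$ is $\mathcal{L}_\mathrm{ring}$-definable. Moreover $w_0$ is henselian (a coarsening of the henselian $v$), so the valuation $\bar v$ induced by $v$ on $Kw_0$, for which $v = w_0\circ\bar v$, is henselian, with residue field $Kv$ and $\mathrm{char}(Kw_0)=0$. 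By henselianity every element of $1+\mathfrak{m}_{\bar v}$ is a nonzero square in $Kw_0$, hence positive, which by the standard characterization of valuations compatible with an ordering forces $\mathcal{O}_{\bar v}$ to be convex in the ordered field $Kw_0$.

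The point I expect to matter most is then that a convex subring $\mathcal{O}$ of an ordered field $(F,<)$ is always externally definable: the set $\mathcal{O}^{>0}$ has no greatest element and $F^{>0}\setminus\mathcal{O}^{>0}$ has no least element, both because $\mathcal{O}$ is a subring closed under addition, so by density of $<$ the partial type saying that $x$ lies strictly above $\mathcal{O}^{>0}$ and strictly below $F^{>0}\setminus\mathcal{O}^{>0}$ is finitely satisfiable, and any realization $b$ of it in a sufficiently saturated elementary extension yields $\mathcal{O}=\{a\in F\mid -b<a<b\}$. Since the ordering of the Euclidean field $Kw_0$ is $\mathcal{L}_\mathrm{ring}$-definable, this presents $\mathcal{O}_{\bar v}$ as an externally definable subset of the pure field $Kw_0$; pulling this back along the parameter-free interpretation of $Kw_0$ in $K$ (as the residue field of the $\mathcal{L}_\mathrm{ring}$-definable ring $\mathcal{O}_{w_0}$) makes the preimage of $\mathcal{O}_{\bar v}$ under the $w_0$-residue map $\mathcal{O}_{w_0}\to Kw_0$ externally definable in $K$, and that preimage is exactly $\mathcal{O}_v$. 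Hence $\mathcal{O}_v$ is definable in $K^\mathrm{Sh}$. The only things left to check are routine: that external definability is preserved under this interpretation, and that $\mathcal{O}_{\bar v}$ is indeed convex in the unique ordering of $Kw_0$.
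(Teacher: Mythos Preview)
Your proof is correct and follows essentially the same approach as the paper: both reduce via the $\emptyset$-definable valuation from Theorem~\ref{JK14} to a Euclidean residue field and then exploit that convex valuation rings in a (definably) ordered field are externally definable. The only cosmetic difference is that the paper, once on the Euclidean field, takes the convex hull of $\mathbb{Z}$ as an externally definable refinement and then invokes Example~\ref{mainex}, whereas you argue directly that $\mathcal{O}_{\bar v}$ is convex and realize the cut it determines; these are two phrasings of the same idea.
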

\begin{proof}
Assume that $(K,v)$ is henselian and $Kv$ is real closed. Then $K$ is orderable. 
We first reduce to the case that $K$ is Euclidean: Note that $v$ is a $2$-henselian valuation with
Euclidean residue field. Let $v_K^{2*}$ be the coarsest $2$-henselian valuation on $K$ with Euclidean residue field, which
 is $\emptyset$-definable on $K$ in $\mathcal{L}_\mathrm{ring}$ by Theorem \ref{JK14}. 
Now, if the induced valuation $\overline{v}$ on $Kv_K^{2*}$
is definable in  $(Kv_K^{2*})^\mathrm{Sh}$, 
then the valuation ring of $v$, which
is the composition of $v_K^{2*}$ and $\overline{v}$, is 
also definable in $K^\mathrm{Sh}$.

Thus, we may assume that $K$ is Euclidean. In this case, $K$ is uniquely ordered
and the ordering on $K$ is $\mathcal{L}_\textrm{ring}$-definable. 
Let $\mathcal{O}_{{w}} \subseteq K$ be the convex hull of $\mathbb{Z}$ in $K$. Then, $\mathcal{O}_w$ is
definable in $K^\mathrm{Sh}$. 
By \cite[Theorem 4.3.7]{EP05}, $(K,w)$ is a $2$-henselian valuation 
ring on $K$ with Euclidean residue field. As $w$ has no proper refinements,
$w$ is the canonical $2$-henselian valuation on $K$.
Thus, we get $\mathcal{O}_w\subseteq \mathcal{O}_v$ and hence $\mathcal{O}_v$ is also definable in $K^\mathrm{Sh}$ by Example \ref{mainex}.
\end{proof}

Note that combining Propositions \ref{notreal} and \ref{real} immediately yields Theorem \ref{A} from the
introduction. Applying Proposition \ref{Shelah}, we conclude:

\begin{Cor} Let $K$ be a field and $v$ a henselian valuation on $K$. Assume that $\mathrm{Th}(K)$ is NIP in some language 
$\mathcal{L}\supseteq \mathcal{L}_\mathrm{ring}$. \label{nonsep}
If $Kv$ is not separably closed, then $(K,v)$ is NIP in the language $\mathcal{L}\cup \{\mathcal{O}\}$.
\end{Cor}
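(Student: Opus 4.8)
The plan is to deduce Corollary \ref{nonsep} directly from Theorem \ref{A} together with the behaviour of NIP under the Shelah expansion. First I would invoke Theorem \ref{A} (equivalently, the combination of Propositions \ref{notreal} and \ref{real}): since $(K,v)$ is henselian and $Kv$ is not separably closed, the valuation ring $\mathcal{O}_v$ is definable in the Shelah expansion $K^\mathrm{Sh}$, and crucially the defining formula and the parameters used are $\mathcal{L}$-formulas (in fact $\mathcal{L}_\mathrm{ring}$-formulas with external parameters), so this definability is a statement about $K$ viewed as an $\mathcal{L}$-structure.

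Next, I would use the hypothesis that $\mathrm{Th}(K)$ is NIP as an $\mathcal{L}$-structure. By Proposition \ref{Shelah} (Shelah's theorem), $K^\mathrm{Sh}$ is then NIP as well. Since $\mathcal{O}_v$ is definable in $K^\mathrm{Sh}$, the expansion $(K^\mathrm{Sh}, \mathcal{O}_v)$ is just $K^\mathrm{Sh}$ again (adding a definable predicate does not change the structure up to interdefinability), hence is NIP. Now $(K,v)$ in the language $\mathcal{L} \cup \{\mathcal{O}_v\}$ is a reduct of $(K^\mathrm{Sh}, \mathcal{O}_v)$: every symbol of $\mathcal{L} \cup \{\mathcal{O}_v\}$ is (the restriction of) a predicate present in $K^\mathrm{Sh}$. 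A reduct of an NIP structure is NIP, so $(K,v)$ is NIP in $\mathcal{L} \cup \{\mathcal{O}_v\}$. Passing from a single model to the theory is routine, since NIP is a property of the theory and elementary extensions/submodels preserve it.

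There is essentially no obstacle here; the content is entirely in Theorem \ref{A}, and the corollary is a two-line bookkeeping argument with Shelah's preservation result. The only point that deserves a word of care is the reduct step: one must check that $\mathcal{L} \supseteq \mathcal{L}_\mathrm{ring}$ guarantees that every symbol we want in $\mathcal{L} \cup \{\mathcal{O}_v\}$ already appears (up to definability) in $K^\mathrm{Sh}$, which holds because $K^\mathrm{Sh}$ expands $K$ as an $\mathcal{L}$-structure and then adds $\mathcal{O}_v$ among its externally definable predicates. So the write-up is simply: by Theorem \ref{A}, $\mathcal{O}_v$ is definable in $K^\mathrm{Sh}$; by Proposition \ref{Shelah}, $K^\mathrm{Sh}$ is NIP; hence its reduct $(K,v)$ in $\mathcal{L} \cup \{\mathcal{O}_v\}$ is NIP.
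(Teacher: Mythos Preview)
Your proposal is correct and matches the paper's own argument exactly: the paper simply states that combining Propositions \ref{notreal} and \ref{real} gives Theorem \ref{A}, and then writes ``Applying Proposition \ref{Shelah}, we conclude'' before stating the corollary. Your write-up spells out the reduct step in more detail than the paper does, but the route is identical.
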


As separably closed fields are always NIP in $\mathcal{L}_\mathrm{ring}$, we note that the residue field
of a henselian valuation on an NIP field is NIP as a pure field.
\begin{Cor} \label{resnip}
Let $K$ be a field and $v$ henselian on $K$. Assume that $\mathrm{Th}(K)$ is NIP in some language 
$\mathcal{L}\supseteq \mathcal{L}_\mathrm{ring}$. Then $Kv$ is NIP as a pure field.
\end{Cor}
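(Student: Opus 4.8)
The plan is to derive Corollary~\ref{resnip} from Corollary~\ref{nonsep} by a simple case distinction on whether $Kv$ is separably closed. First I would observe that if $Kv$ is \emph{not} separably closed, then Corollary~\ref{nonsep} applies directly: $(K,v)$ is NIP in $\mathcal{L}\cup\{\mathcal{O}_v\}$, and since the residue field $Kv$ together with its ring structure is interpretable in $(K,v)$ (it is the quotient $\mathcal{O}_v/\mathfrak{m}_v$ of the $\mathcal{L}_{\mathrm{ring}}\cup\{\mathcal{O}_v\}$-definable valuation ring by its maximal ideal, which is itself definable as the set of non-units), NIP passes to the interpreted structure $Kv$ as a pure field. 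Here I am using the standard fact that NIP is inherited by interpretable structures.

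The remaining case is when $Kv$ \emph{is} separably closed. Then there is nothing to prove: separably closed fields are NIP in $\mathcal{L}_{\mathrm{ring}}$ (this is classical, and is already invoked in the paragraph preceding the statement), so $Kv$ is NIP as a pure field regardless of any assumption on $K$. Combining the two cases gives the conclusion in full generality.

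I do not expect any genuine obstacle here, as the statement is essentially a corollary of Corollary~\ref{nonsep} packaged with the trivial observation about separably closed fields. The only point requiring a sentence of care is the interpretability of $Kv$ as a pure field inside $(K,v)$ in the language $\mathcal{L}\cup\{\mathcal{O}_v\}$, together with the transfer of NIP along interpretations; both are routine. So the proof is just: split on whether $Kv$ is separably closed, apply Corollary~\ref{nonsep} in one case and the classical NIP-ness of separably closed fields in the other.
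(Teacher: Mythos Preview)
Your proposal is correct and matches the paper's own argument exactly: the paper justifies the corollary by the sentence ``As separably closed fields are always NIP in $\mathcal{L}_\mathrm{ring}$, we note that the residue field of a henselian valuation on an NIP field is NIP as a pure field,'' which is precisely your case split between $Kv$ separably closed (classical) and $Kv$ not separably closed (apply Corollary~\ref{nonsep} and interpret $Kv$ in $(K,v)$). Your added remark on interpretability of $Kv$ in $(K,v)$ simply makes explicit what the paper leaves implicit.
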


Recall that a field $K$ is called \emph{henselian} if it admits some non-trivial henselian valuation.
\begin{Cor} \label{exist}
Let $K$ be a henselian field such that $\mathrm{Th}(K)$ is NIP in some language 
$\mathcal{L}\supseteq \mathcal{L}_\mathrm{ring}$. Assume that $K$ is neither separably closed
nor real closed. Then $K$ admits some non-trivial 
externally definable henselian valuation $v$.
In particular, $(K,v)$ is NIP in the language $\mathcal{L}\cup \{\mathcal{O}_v\}$.
\end{Cor}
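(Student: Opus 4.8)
The plan is to deduce Corollary \ref{exist} directly from Corollary \ref{nonsep} together with Corollary \ref{resnip}, by a short induction on the structure of henselian valuations on $K$. First I would fix a non-trivial henselian valuation $v_0$ on $K$, which exists by hypothesis since $K$ is henselian. If the residue field $Kv_0$ is not separably closed, then Corollary \ref{nonsep} applies immediately: $(K,v_0)$ is NIP in $\mathcal{L}\cup\{\mathcal{O}_{v_0}\}$, and since the valuation is externally definable (indeed $\mathcal{L}_\mathrm{ring}$-definable after passing to a coarsening, or externally definable by Theorem \ref{A}), we are done taking $v=v_0$. So the only case that requires work is when $Kv_0$ is separably closed.

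In that remaining case I would argue that $Kv_0$ must in fact be \emph{algebraically} closed, not merely separably closed: by Corollary \ref{resnip} the residue field $Kv_0$ is NIP as a pure field, and a separably closed NIP field of positive characteristic has finite imperfection degree; but more to the point, the obstruction is that $Kv_0$ being separably closed prevents us from applying Theorem \ref{A}. Here I would invoke the structure of the henselian topology on $K$: since $K$ is itself not separably closed (by hypothesis) while $Kv_0$ is separably closed, the value group $v_0 K$ is non-trivial and one can look at the \emph{finest} proper henselian coarsening, or rather, one uses that the canonical henselian valuation $v_K$ of $K$ has residue field that is either separably closed or not admitting any non-trivial henselian valuation. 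The cleanest route: let $v$ be the canonical henselian valuation $v_K$ on $K$. Since $K$ is neither separably closed nor real closed but is henselian, $v_K$ is non-trivial. Now either $Kv_K$ is not separably closed, in which case Corollary \ref{nonsep} gives that $(K,v_K)$ is NIP in $\mathcal{L}\cup\{\mathcal{O}_{v_K}\}$ and $v_K$ is externally definable by Theorem \ref{A}, and we are done; or $Kv_K$ is separably closed. In the latter subcase I would note that $Kv_K$ carries no non-trivial henselian valuation (by maximality/canonicity of $v_K$, comparability forces any henselian valuation on $K$ to be comparable with $v_K$, and pushing a strictly finer one down contradicts that $v_K$ is the canonical, i.e. coarsest-of-the-fine-class or finest-of-the-coarse-class, valuation), so $Kv_K$ is a separably closed field which does not admit a non-trivial henselian valuation.

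To finish I would then produce an externally definable non-trivial coarsening of $v_K$ with non-separably-closed residue field, or argue more cheaply: apply Corollary \ref{nonsep} is not available, so instead observe that $v_K$ is itself $\mathcal{L}_\mathrm{ring}$-definable in this subcase. Indeed, when $K$ is not separably closed and admits a henselian valuation with separably closed residue field, a theorem on definable henselian valuations (the $p$-henselian machinery of Theorem \ref{JK14}, applied to a prime $p$ for which $K$ has a Galois extension of degree $p$) shows that the canonical $p$-henselian valuation $v_K^p$ is non-trivial and $\emptyset$-$\mathcal{L}_\mathrm{ring}$-definable; since $v_K^p$ is then comparable with $v_K$ and its residue field $Kv_K^p$ satisfies $Kv_K^p\neq Kv_K^p(p)$, it is a coarsening of $v_K$, hence also henselian, and being $\mathcal{L}_\mathrm{ring}$-definable it is in particular externally definable, so we take $v = v_K^p$ and conclude $(K,v)$ is NIP in $\mathcal{L}\cup\{\mathcal{O}_v\}$ directly from Proposition \ref{Shelah}. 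The main obstacle I expect is precisely this separably-closed-residue-field subcase: one has to rule out that \emph{every} henselian valuation on $K$ has separably closed residue field (which would leave nothing externally definable to latch onto via Theorem \ref{A}), and the resolution is to extract a non-trivial $p$-henselian — hence $\mathcal{L}_\mathrm{ring}$-definable by Theorem \ref{JK14} — valuation from the hypothesis that $K$ itself is neither separably closed nor real closed, using that $K$ admits a Galois extension of degree $p$ for some prime $p$ and that $p$-henselianity is inherited along the henselian valuation.
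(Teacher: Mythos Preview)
Your overall strategy matches the paper's two-line proof: split on whether some non-trivial henselian valuation on $K$ has non-separably-closed residue field (then Theorem~\ref{A} applies), or every henselian valuation has separably closed residue field (then one must extract an $\mathcal{L}_\mathrm{ring}$-definable henselian valuation from the assumption that $K$ itself is not separably closed). For the second case the paper simply cites \cite[Theorem 3.8]{JK14a}, whereas you try to reconstruct that argument from Theorem~\ref{JK14}. The detour through the canonical henselian valuation $v_K$ is unnecessary but harmless; the real content is your final paragraph.

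That paragraph, however, has genuine gaps. First, your claim that $Kv_K^p \neq Kv_K^p(p)$ is false in the situation at hand: since $Kv_K$ is separably closed, $v_K \in H_2^p(K)$, so $H_2^p(K)\neq\emptyset$ and therefore $v_K^p$ is by definition the \emph{coarsest} element of $H_2^p(K)$, whence $Kv_K^p = Kv_K^p(p)$. Your conclusion that $v_K^p$ coarsens $v_K$ is nonetheless correct, but for the opposite reason. Second, Theorem~\ref{JK14} requires $\zeta_p \in K$ when $\mathrm{char}(K)\neq p$; you ignore this, and the standard fix (pass to $K(\zeta_p)$ and restrict, as in the proof of Proposition~\ref{notreal}) is exactly the kind of manoeuvre that \cite[Theorem 3.8]{JK14a} packages up. Third, you assert that $K$ has a Galois extension of prime degree $p$ because $K$ is not separably closed; this is not automatic for arbitrary fields and uses that $G_K$ is pro-soluble here (since $K$ admits a henselian valuation with separably closed residue field), a point the paper makes explicit elsewhere (see Case~1 in the proof of Theorem~\ref{main2}). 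None of these gaps is fatal, but together they are precisely what the cited external result is there to absorb.
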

\begin{proof}
If $K$ admits some non-trivial henselian valuation $v$ such that $Kv$ is not separably closed, the result follows immediately
by Propositions \ref{notreal} and \ref{real}. Otherwise, $K$ admits a non-trivial $\mathcal{L}_\mathrm{ring}$-definable
henselian valuation by \cite[Theorem 3.8]{JK14a}.
\end{proof}

The question of what happens in case $Kv$ is separably closed  is addressed in the next section.

\section{Separably closed residue fields} \label{scf}
In this section, we give a partial answer to Question \ref{mainq} in case the residue field is separably closed. 
Recall that when $(K,v)$ is henselian and 
the residue field is not separably closed, we may add a symbol for 
$\mathcal{O}_v$ to \emph{any} NIP field structure on $K$ and obtain an NIP structure.
First, we note that
we cannot expect the same when it comes to separably closed (residue) fields:
\begin{Ex}[{\cite[Example 5.5]{HHJ19}}]
Let $K$ be a separably closed field and assume that $v_1$ and $v_2$ are 
two independent valuations on $K$. Then 
$(K,v_1,v_2)$ has IP in $\mathcal{L}_\mathrm{ring}\cup \{\mathcal{O}_1\} \cup \{\mathcal{O}_2\}$.
\end{Ex}

There are of course many examples of separably 
closed fields with independent valuations:
\begin{Ex} \label{extra} Let $\mathbb{Q}^\mathrm{alg}$ be an algebraic closure of $\mathbb{Q}$ 
and let $p\neq l$ be prime.
Consider a prolongation $v_p$ (respectively $v_l$) of the $p$-adic (respectively $l$-adic) valuation on $\mathbb{Q}$ to
$\mathbb{Q}^\mathrm{alg}$. Then $v_p$ and $v_l$ are independent, thus the bi-valued field $(\mathbb{Q}^\mathrm{alg},v_p,v_l)$
has IP.
\end{Ex}

As any separably closed valued field has NIP in  $\mathcal{L}_\mathrm{ring}\cup \{\mathcal{O}\}$ and any valuation is henselian
on a separably closed field, we 
cannot expect an analogue of Corollary \ref{nonsep} to hold for 
separably closed residue fields. We will instead
focus on the following version of Question \ref{mainq}:
\begin{Q} \label{vmainq}
Let $K$ be NIP as a pure field and $v$ a henselian valuation on $K$ with $Kv$ separably closed. 
Is $(K,v)$ NIP in $\mathcal{L}_\mathrm{ring} \cup \{\mathcal{O}\}$?
\end{Q}

\subsection{Ingredients for the proof of Theorem \ref{B}}
We will split the proof of Theorem \ref{B} into an equicharacteristic case and a mixed characteristic case. In both
cases, separably algebraically maximal Kaplansky fields play an important role.
\begin{Def} 
Let $(K,v)$ be a valued field and $p=\mathrm{char}(Kv)$. 
\begin{enumerate}
\item
We say that $(K,v)$ is \emph{(separably) algebraically maximal} if $(K,v)$ has no immediate (separable) algebraic extensions.
\item We say that $(K,v)$ is \emph{Kaplansky} if $p=0$ or if $p>0$ and the value group $vK$ is $p$-divisible and the residue field $Kv$ is perfect
and admits no Galois extensions of degree divisible by $p$.
\end{enumerate}
\end{Def}
Note that separable algebraic maximality always implies henselianity. See \cite{Kuh13} for more details on (separably)
algebraically maximal Kaplansky fields.
As mentioned in the introduction, there is a transfer theorem which works for separably algebraically maximal Kaplansky fields:
\begin{Thm}[{\cite[Theorem 3.3]{JS}} and {\cite[Proposition 4.1]{AJ19b}}] \label{SAMK}
Any complete theory of separably algebraically maximal Kaplansky fields 
is NIP if and only if corresponding theories of the residue field and value group are NIP.
\end{Thm}

The fact that the theory $\mathrm{SCVF}$ of separably closed valued fields is NIP has been proven (independently) by 
Delon and Hong; however, Delon's proof is unpublished and Hong's proof only works for finite degree of imperfection. It is also an immediate consequence of Theorem \ref{SAMK}:
\begin{Cor}[{\cite[Corollary 4.2]{AJ19b}}] \label{del}
Let K be separably closed and let $v$ be a valuation on $K$. 
Then $(K,v)$ has NIP as a pure valued field.
\end{Cor}

Using an argument by Scanlon, we reduce Question \ref{mainq} to the case of algebraically closed residue fields.
\begin{Prop}[Scanlon] Let $(K,v)$ be a henselian valued field with $\mathrm{char}(Kv)=p$,
such that $Kv$ is not perfect and has no separable extensions of degree divisible by $p$. Then $\mathcal{O}_v$ is definable
in $\mathcal{L}_\mathrm{ring}$. \label{scan}
\end{Prop}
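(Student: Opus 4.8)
The plan is to exploit the hypotheses on $Kv$ to show that the valuation ring $\mathcal O_v$ coincides with a ring that one can write down in $\mathcal L_\mathrm{ring}$ directly. The key observation is that, since $Kv$ has no separable extensions of degree divisible by $p=\mathrm{char}(Kv)$, the canonical $p$-henselian valuation machinery applies: $Kv=Kv(p)$ (in the notation of the previous subsection $v$ lies in $H^p_2$-type behaviour), so $v$ itself is $p$-henselian with $p$-closed residue field. But the residue field being non-perfect is what gives us traction, because non-perfectness is a \emph{definable} feature: there is some element whose $p$-th root is missing in the residue field, and pulling this back to $K$ lets us see the valuation ring. Concretely, I would first pass to a coarsening or use the Artin--Schreier/Kummer theory of $Kv$ to produce an $\mathcal L_\mathrm{ring}$-formula detecting which elements of $K$ have residue that is a $p$-th power.

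First I would set up the reduction: because $Kv$ has no separable extension of degree divisible by $p$, every finite extension of $Kv$ is purely inseparable of $p$-power degree over a subfield, and in particular the absolute Galois group of $Kv$ is a pro-$p'$ group; combined with non-perfectness this forces $[Kv:Kv^p]$ to be a nontrivial power of $p$. Then I would invoke Scanlon's actual idea (which the paper attributes to him): the set $\mathcal O_v$ is definable because one can characterize it as the set of $x\in K$ such that $1+tx^{?}$ or some polynomial expression involving a fixed non-$p$-th-power parameter stays inside the image of the Frobenius-type map on a definable subset. More precisely, fixing $a\in\mathcal O_v$ with $\bar a\notin (Kv)^p$, henselianity implies that for $x\in K$ one has $v(x)\ge 0$ if and only if a certain equation like $y^p - y = $ (expression in $x$ and $a$) or $y^p = a + (\text{expression in }x)$ has or lacks a solution in $K$, because henselianity lets one lift $p$-th roots and Artin--Schreier roots from the residue field exactly when the residue of the relevant term is a $p$-th power (resp.\ lies in the image of $\wp$), and the chosen parameter $a$ obstructs this precisely off $\mathcal O_v$. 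This yields an $\mathcal L_\mathrm{ring}(a)$-definition, and then a standard argument (the parameter $a$ can be replaced by an existential quantifier over all witnesses to non-perfectness, all of which work) removes the parameter or at least gives definability in $\mathcal L_\mathrm{ring}$ as claimed.

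The step I expect to be the main obstacle is producing the correct polynomial identity that pins down $\mathcal O_v$ using henselianity and the non-$p$-th-power parameter — i.e., verifying that for the chosen $a$ with $\bar a \notin (Kv)^p$ and a suitable term $f(x,a)$, the residue $\overline{f(x,a)}$ is a $p$-th power in $Kv$ exactly when $v(x)\ge 0$ (or exactly when $v(x)<0$), so that henselian lifting of $p$-th roots gives the biconditional. One has to be careful about characteristic: if $\mathrm{char}(K)=p$ one should use the additive Artin--Schreier polynomial $y^p-y$ or $y^p - ty$, whereas if $\mathrm{char}(K)=0$ one works with $p$-th powers $y^p$ and must keep track of $v(p)$; in the latter case henselianity still applies to $y^p - (a + \pi x)$ for a suitable uniformizer-like adjustment, but the bookkeeping on when the derivative has positive valuation (so that Hensel's lemma in its refined form applies) is the delicate point. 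Once that identity is in hand, the definability and the parameter-elimination are routine, so I would spend essentially all the effort on that one lemma.
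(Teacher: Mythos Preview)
Your strategy matches the paper's: fix $t\in\mathcal O_v$ with $\bar t\notin (Kv)^p$ and detect the valuation via solvability of an Artin--Schreier-type equation, lifted by Hensel. The paper's formula (in which your parameter is called $t$ and the element being tested is called $a$) is
\[
\{a\in K : v(a)\le 0\}=\bigl\{a\in K : \exists\, L\supseteq K,\ [L:K]<p,\ \exists\, y\in L,\ y^p-ay=t\bigr\}.
\]
The piece you have not found is the quantifier over extensions of degree $<p$. For the polynomial $y^p-ay-t$ this is essential: when $v(a)\le 0$ one substitutes $y=bZ$ with $b^{p-1}=a$ to reduce to the genuine Artin--Schreier equation $Z^p-Z=t/(ba)$, and nothing in the hypotheses forces $K$ to contain $(p{-}1)$-th roots (only separable extensions of degree \emph{divisible by} $p$ are excluded). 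Adjoining $b$ gives $[L:K]\le p-1$; the residue field $Lw$, being a separable extension of $Kv$ of degree $<p$, is still Artin--Schreier closed, so $Z^p-Z=\overline{t/(ba)}$ has a root there and Hensel lifts it (the derivative reduces to $-1$). Conversely, if $v(a)>0$ and $y^p-ay=t$ holds in some $L$ of degree $<p$, then $w(y)\ge 0$ and $\bar y^{\,p}=\bar t$ in $Lw$, contradicting $\bar t\notin (Lw)^p$ since $[Lw:Kv]<p$. Extensions of bounded degree are uniformly interpretable in $K$, so this is an $\mathcal L_\mathrm{ring}$-formula with parameter $t$; no parameter elimination is attempted or needed.

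Your concern about splitting into equal and mixed characteristic is misplaced: the argument is uniform, because Hensel is applied to $Z^p-Z-c$, whose derivative $pZ^{p-1}-1$ reduces to $-1$ in $Lw$ regardless of $\mathrm{char}(K)$, so no tracking of $v(p)$ or uniformizers is required. The opening detour through the canonical $p$-henselian valuation does not help either: the definability theorem for $v_K^p$ needs $\zeta_p\in K$ when $\mathrm{char}(K)\ne p$, and in any case would only pin down the coarsest valuation with $p$-closed residue rather than $v$ itself.
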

\begin{proof}
Choose $t \in \mathcal{O}_v$ such that we have $\bar{t} \in Kv \setminus Kv^p$. Consider the
$\mathcal{L}_\mathrm{ring}$-definable subset of $K$ given by
$$S:=\{a \in K \,|\, \exists\, L \supseteq K \textrm{ with }[L:K]<p \textrm{ and }\exists y \in L:\,y^p-ay=t\}.$$ 
We claim that $S = \{a \in K\,|\, v(a) \leq 0\}$ holds.
We first show the inclusion $S \subseteq \{a \in K\,|\, v(a) \leq 0\}$.
Assume for a contradiction that there is some $a \in S$ with $v(a)>0$. Take $L\supseteq K$ and $y \in L$ witnessing
$a \in S$, i.e., we have $[L:K]<p$ and $y^p-ay=t$. Let $w$ denote the unique prolongation of $v$ to $L$. Note that, as
$w(t) \geq 0$ and $w(a)>0$,
we have $w(y) \geq 0$. Hence, we get $\bar{y}^p=\bar{t} \in Lw$. However, as $[Lw:Kv]\leq[L:K]<p$, this gives the desired
contradiction.\\
For the other inclusion, suppose that we have $v(a)\leq 0$. Choose any $b \in K^\mathrm{alg}$ with
$b^{p-1}=a$ and set $L:=K(b)$. In particular, we have $[L:K]\leq p-1 <p$. Let $w$ denote the unique extension
of $v$ to $L$.
Consider the equation $$baZ^p-Zba-t= (bZ)^p-a(bZ)-t=0$$
over $L$.
As we have $w(ba)\leq 0$, this equation has a solution in $L$ if and only if the equation
$$Z^p - Z - \dfrac{t}{ba}=0$$
over $\mathcal{O}_w$ has a solution in $\mathcal{O}_w$. As $(L,w)$ is henselian and $Lw$, a separable extension of $Kv$, also 
has no separable extensions of degree divisible by $p$, there is some $z \in \mathcal{O}_w$
with $z^p-z=\frac{t}{ba}$. For $y = zb$, we conclude $y^p-ay=t$ as desired.\\
It now follows immediately from the claim that 
$\mathcal{O}_v$ is also definable.
\end{proof}

\subsection{Compositions of NIP valuations}
In the proof of Theorem \ref{main2}, we decompose the valuation $v$ on $K$ into several pieces: a
 coarsening $u$ of $v$ and a valuation
$\bar{v}$ on $Ku$ such that  $v$ is the composition of $\bar{v}$ and $u$. 
However, in general, it is not
clear whether showing that each of these is NIP is sufficient to show that 
$v$ is NIP. The situation is simpler if the residue field
$Ku$ of $u$ is stably embedded.
\begin{Def}
Let $M$ be a structure in some language $\mathcal{L}$ and $\mathcal{N} \succ M$
 sufficiently saturated. 
A definable set $D$ is said to be \emph{stably embedded} 
if for every formula $\phi(x;y)$, $y$ a finite tuple of variables from the same sort as $D$, there is a formula $d\phi(z;y)$ such that for any $a\in \mathcal{N}^{|x|}$, there is a tuple $b\in D^{|z|}$, such that $\phi(a;D)=d\phi(b;D)$.
\end{Def}

See \cite[Chapter 3]{Simon:book} for more on stable embeddedness.
Note that \cite[Proposition 2.5]{JS} proves that we can add NIP structure on a stably embedded set and stay NIP. This always works in separably algebraically maximal Kaplansky fields:

\begin{Prop}[{\cite[Lemma 3.1]{JS}} and {\cite[Proof of Proposition 4.1]{AJ19b}}] \label{SEK}
Let $(K,v)$ be a separably algebraically maximal Kaplansky field. Then, the residue field
$Kv$ is stably embedded.
\end{Prop}

There are more natural examples of henselian fields with stably embedded residue fields.

\begin{Def} Let $(K,v)$ be a valued field of characteristic $(\mathrm{char}(K), \mathrm{char}(Kv))=(0,p)$
for some prime $p >0$. We say that $(K,v)$ is 
\begin{enumerate}
\item
\emph{unramified} if $v(p)$ is the smallest positive element of $vK$ and
\item \emph{finitely ramified} if the interval $[0,v(p)] \subseteq vK$ is finite.
\end{enumerate}
\end{Def}

The residue field of any unramified henselian valued field is
purely stable embedded as an $\mathcal{L}_\textrm{ring}$-structure ({\cite[Corollary 13.7]{AJ19}}). 
However, the residue field of a finitely ramified henselian 
valued field need not be stably embedded as a pure
field (cf.~\cite[Example 12.8]{AJ19}), and it is not known whether it is stably embedded in case the residue
field is not perfect. Nonetheless, using that every finitely ramified henselian
valued field is up to elementary equivalence
a finite extension of an unramified field with the same residue field, one can nonetheless prove
an NIP transfer:

\begin{Prop}[{\cite[Corollary 4.7]{AJ19b}}] \label{fin}
Let $(K,v)$ be a henselian valued field of mixed characteristic and $u$ a coarsening of $v$ such that $(K,u)$ is finitely ramified and $(Ku, \bar{v})$ is NIP. Then $(K,v)$ is NIP.
\end{Prop}

We finish the section with some open problems.
\begin{Qs}
\begin{enumerate}
\item Is the composition of (henselian) NIP valuations NIP?
\item Is the residue field of every henselian valuation on an NIP field stably embedded?
\end{enumerate}
\end{Qs}
By \cite[Proposition 2.5]{JS}, a positive answer to the second question would imply
a positive answer to the henselian case of the first question. Moreover, there seems to be no known
example of a henselian field in $\mathcal{L}_\mathrm{ring} \cup \{\mathcal{O}_v\}$ such that the residue field is not stably embedded.

\subsection{The case of separably closed residue fields}
In this subsection, we prove our second main result which was mentioned as Theorem \ref{B} in the introduction.
We start with the equicharacteristic case:
\begin{Prop}
Let $K$ be NIP, $v$ henselian on $K$ with $\mathrm{char}(K)=\mathrm{char}(Kv)$.
Then, $(K,v)$ is NIP as a pure valued field. \label{Prop:equi} \label{equi}
\end{Prop}
\begin{proof} We may assume that $v$ is non-trivial as otherwise the statement is clear.
In case $Kv$ is non-separably closed, the statement follows from Corollary \ref{nonsep}. Now assume that $Kv$
is separably closed, in particular, $Kv$ is NIP as a pure field. Moreover, we 
assume that $K$ is not separably closed
since otherwise the conclusion follows from Corollary \ref{del}.
If $\mathrm{char}(Kv)=0$, the statement follows immediately from Delon's classical result (\cite[Theorem A.15]{Simon:book}) - or by the fact that 
any equicharacteristic $0$ henselian valued field
is separably algebraically maximal Kaplansky. 
On the other hand, if $\mathrm{char}(K)=p>0$, then $K$ admits no Galois extensions of degree divisible by $p$
by \cite[Corollary 4.4]{KSW}. Thus, $vK$ is $p$-divisible and 
$Kv$ is perfect (for an argument for the latter, see the proof of 
\cite[Proposition 4.1]{JS}). As $Kv$ is separably closed, 
we conclude that $(K,v)$ is Kaplansky. Moreover, any immediate
separable extension of $K$ has degree divisible by $p$ by the lemma of
Ostrowski 
(\cite[see (3) on p.~280 for the statement and p.~300 for the proof]{Kuh11}). 
Thus, $(K,v)$ is separably algebraically maximal with algebraically closed residue field.
By Theorem \ref{SAMK}, $(K,v)$ is NIP.
\end{proof}

We now come to the general case:
\begin{Thm} \label{main2}
Let $K$ be NIP, $v$ henselian on $K$.
Then $(K,v)$ is NIP as
a pure valued field. 
\end{Thm}
\begin{proof} 
If $Kv$ is not separably closed, the statement follows from \ref{nonsep}. In the
case when $Kv$ is separably closed and non-perfect, the theorem
holds by Proposition \ref{scan}. Thus, we may assume that $Kv$ is algebraically closed. Moreover, by Corollary \ref{del}, 
we may assume that $K$ is not
separably closed. The equicharacteristic case 
follows from Proposition \ref{Prop:equi}. Thus, we now assume 
$\mathrm{char}(K)=0$ and $\mathrm{char}(Kv)=p>0$.
Furthermore, we assume that $(K,v)$ is
$\aleph_1$-saturated. 

We consider the standard decomposition of $v$ (writing $\Gamma:=vK$):
Let $\Delta_p \leq \Gamma$ be the biggest convex subgroup not containing $v(p)$ and let $\Delta_0 \leq \Gamma$ be
the smallest convex subgroup containing $v(p)$. We get the following decomposition of the place $\varphi_v:K \to Kv$
corresponding to $v$:
$$K =K_0 \xlongrightarrow{\Gamma/\Delta_0} K_1 \xlongrightarrow{\Delta_0/\Delta_p} K_2  \xlongrightarrow{\Delta_p} K_3=Kv$$
where every arrow is labelled with the corresponding value group.
Note that $\mathrm{char}(K)=\mathrm{char}(K_1)=0$ and $\mathrm{char}(K_2)= \mathrm{char}(Kv)=p$.
Let $v_i$ denote the valuation on $K_i$ corresponding to the place $K_i \to K_{i+1}$.

By \cite[Theorem 1.13]{AnKuh16}, the value group $v_1K_1=\Delta_0/\Delta_p$ of $(K_1,v_1)$ is either isomorphic to $\mathbb{Z}$ or $\mathbb{R}$.
Moreover, by saturation (and since $\Delta/\Delta_0$ has rank $1$), $(K_1,v_1)$ is spherically complete and thus algebraically
maximal (compare
also the proof of \cite[Lemma 6.8]{Joh15}). 
We now consider two cases:

In case $v_1K_1$ is isomorphic to $\mathbb{Z}$, the composition $u$ of $v_1$ and 
$v_0$ is finitely ramified. 
Since $u$ is henselian, Corollary \ref{resnip} implies that $Ku=K_2$ is an NIP field, thus
$(K_2,v_2)$ is NIP (Proposition \ref{Prop:equi}).
Applying Proposition \ref{fin}, we
conclude that $(K,v)$ is NIP.
 
On the other hand, in case $(K_1,v_1)$ has divisible value group, we first show that 
$K_2$ is perfect.
Assume $K_2$ is not perfect. Recall that it is NIP by Corollary \ref{resnip}, and hence admits
no Galois extensions of degree divisible by $p$. Hence, by Proposition \ref{scan}, the composition $u$ of $v_1$ and
$v_2$ is again definable. But this contradicts $\aleph_1$-saturation: Recall that $\Delta_p$ is 
the biggest convex subgroup not containing $v(p)$, and that $\Delta_0$ is the smallest convex subgroup
containing $v(p)$. If $\Delta_p$ is definable in $(K,v)$, then there is always a minimum positive
element in $\Delta_0/\Delta_p$ since by saturation, $\Delta_0/\Delta_p$ must otherwise contain a convex subgroup.
Thus, if $\Delta_0/\Delta_p$ is isomorphic to $\mathbb{R}$, $K_2$ is perfect.

We now argue that $(K_1, v_1)$ is an algebraically maximal Kaplansky field. 
By what we have just shown, its
residue field $K_2$ is perfect and NIP (using Corollary \ref{resnip} again), and
by assumption we are in the case when
$(K_1,v_1)$ is divisible. Thus, $(K_1,v_1)$ is
Kaplansky. Since we have already argued that $(K_1,v_1)$
is algebraically maximal, Proposition \ref{SAMK} now implies that $(K_1,v_1)$ is NIP. 
By \ref{Prop:equi}, also $(K,v_0)$ and $(K_2,v_2)$ are NIP.
Moreover, by Proposition \ref{SEK}, $K_2 = K_1v_1$ is stably embedded as a pure field in $(K_1,v_1)$
and of course, being an equicharacteristic $0$ henselian valued field, $K_1=Kv_0$
is stably embedded as a pure field in $(K,v_0)$. Thus, applying \cite[Proposition 2.5]{JS}
twice, we finally conclude that $(K,v)$ is NIP.
\end{proof}

\section{Ordered fields} \label{ord}
In this section, we use the same technique as in the proof of Proposition \ref{real} 
to study convex
valuation rings on an ordered field. 
We show that any convex valuation ring $\mathcal{O}_v$ on $K$ is definable in $(K,<)^\mathrm{Sh}$. The
idea to consider convex valuation rings on ordered fields was suggested by Salma Kuhlmann.

\begin{Def} Let $(K,<)$ be an ordered field and $R \subseteq K$ a subring.
\begin{enumerate}
\item 
The \emph{$<$-convex hull of $R$ in $K$} is defined as
$$\mathcal{O}_R(<) :=\{x \in K\, :\,x,-x < a \textrm{ for some }a \in R\}.$$
\item We say that $R$ is \emph{$<$-convex} if  $\mathcal{O}_R(<)=R$.
\end{enumerate}
\end{Def}

The following facts about convex valuation rings are well-known.
\begin{Fact}[{\cite[p.\,36]{EP05}}] Let $(K,<)$ be an ordered valued field. 
\begin{enumerate}
\item Any convex subring of $K$ containing $1$ is a valuation ring.
\item A subring $R \subseteq K$ is $<$-convex if and only if $R$ is a convex subgroup of the additive group of $K$. 
Thus, any two valuations $v,w$ on $K$ which are convex with respect to $<$ are comparable.
\item There is a (unique) finest valuation $v_0$ on $K$ which is convex with respect to $<$. 
It is called
the \emph{natural valuation} of $(K,<)$.
The valuation ring $\mathcal{O}_{v_0}$ is the convex hull of the integers in $(K,<)$.
\end{enumerate}
\end{Fact}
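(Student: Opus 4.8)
The plan is to prove the three items in turn, in each case reducing to elementary properties of convex subgroups of the ordered abelian group $(K,+,<)$. For (1), I would take a convex subring $R \subseteq K$ with $1 \in R$ and an element $x \in K^{\times}$, and show that $x \in R$ or $x^{-1} \in R$. Since $R$ is in particular a subgroup of $(K,+)$, I may assume $x > 0$; if $x \notin R$, then convexity of $R$ together with $0,1 \in R$ forces $x > a$ for every nonnegative $a \in R$, hence $x > 1$ and $0 < x^{-1} < 1$, so $x^{-1} \in R$ by convexity again.

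For (2), the first observation is that $R \subseteq \mathcal{O}_R(<)$ holds for every subring $R$ with $1 \in R$, since $|r|+1 \in R$ bounds $\pm r$; so being $<$-convex is equivalent to $\mathcal{O}_R(<) \subseteq R$, and I would check that this in turn is equivalent to $R$ being order-convex as a subset of $K$ (one direction is immediate; for the other, if $a \leq c \leq b$ with $a,b \in R$, then $\pm c$ are bounded by $\max(|a|,|b|)+1 \in R$). An order-convex subring is exactly a convex subgroup of $(K,+)$ containing $1$, which gives the stated equivalence. For the comparability of two $<$-convex valuation rings $\mathcal{O}_v$ and $\mathcal{O}_w$, I would invoke the standard fact that the convex subgroups of a totally ordered abelian group are linearly ordered by inclusion, applied to the underlying additive subgroups.

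For (3), I would set $\mathcal{O}_0 := \{x \in K : |x| < n \text{ for some } n \in \mathbb{N}\}$, the convex hull of $\mathbb{Z}$, and check that it is a subring containing $1$ that is order-convex (all straightforward from $|x+y| < n+m$, $|xy| < nm$, and $|c| \leq \max(|a|,|b|)$ when $a \leq c \leq b$), so by (1) and (2) it is a $<$-convex valuation ring, call it $\mathcal{O}_{v_0}$. Conversely, any $<$-convex valuation ring $\mathcal{O}_w$ contains $1$ and is closed under $\pm$, hence contains $\mathbb{Z}$, and being order-convex it then contains the convex hull of $\mathbb{Z}$; thus $\mathcal{O}_{v_0} \subseteq \mathcal{O}_w$, which yields both the existence and the uniqueness of the finest $<$-convex valuation.

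None of these steps is genuinely hard: the only ingredient that is not entirely self-contained is the classical lemma that convex subgroups of an ordered abelian group form a chain under inclusion, and that lemma is exactly what the comparability clause in (2) and the uniqueness clause in (3) rest on; everything else is bookkeeping with absolute values and order-convexity. Accordingly I expect no real obstacle, and in the write-up I would simply cite \cite[p.\,36]{EP05} rather than spell all of this out.
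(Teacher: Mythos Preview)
Your proposal is correct, and in fact the paper does not prove this Fact at all: it is stated with a citation to \cite[p.\,36]{EP05} and used without further argument. Your own final remark---that in the write-up you would simply cite \cite[p.\,36]{EP05}---is exactly what the paper does, so there is nothing to compare.
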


It is now an easy consequence of the properties of the natural valuation that convex valuation rings are definable in the 
Shelah expansion:
\begin{Prop}
Let $(K,<)$ be an ordered field and $\mathcal{O}_v$ a convex valuation ring on $K$. Then
$\mathcal{O}_v$ on $K$ is definable in $(K,<)^\mathrm{Sh}$.
\end{Prop}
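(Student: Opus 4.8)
The plan is to mimic exactly the argument used in the proof of Proposition~\ref{real}. The key point is that the natural valuation $v_0$ of $(K,<)$ is the finest convex valuation and its valuation ring is the convex hull of $\mathbb{Z}$ in $(K,<)$; this hull is an externally definable subset of $K$ in the language $\mathcal{L}_\mathrm{ring}\cup\{<\}$, being the union of the (definable) intervals $[-n,n]$, or equivalently the set of elements bounded above in absolute value by some element realising the cut ``larger than every integer'' in an $|K|^+$-saturated elementary extension. Hence $\mathcal{O}_{v_0}$ is definable in $(K,<)^\mathrm{Sh}$.

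Now let $\mathcal{O}_v$ be an arbitrary convex valuation ring on $K$. By the Fact quoted above, $\mathcal{O}_v$ and $\mathcal{O}_{v_0}$ are comparable, and since $v_0$ is the finest convex valuation we get $\mathcal{O}_{v_0}\subseteq\mathcal{O}_v$. Thus $v$ is a coarsening of $v_0$. Applying Example~\ref{mainex} to the valued field $(K,v_0)$ and its coarsening $v$, we conclude that $\mathcal{O}_v$ is definable in $(K,v_0)^\mathrm{Sh}$. Since $\mathcal{O}_{v_0}$ itself is definable in $(K,<)^\mathrm{Sh}$, every set definable in $(K,v_0)^\mathrm{Sh}$ is definable in $((K,<)^\mathrm{Sh})^\mathrm{Sh}$, and the Shelah expansion is idempotent in the relevant sense (externally definable sets over $(K,<)^\mathrm{Sh}$ are externally definable over $(K,<)$, cf.~\cite[Chapter 3]{Simon:book}), so $\mathcal{O}_v$ is already definable in $(K,<)^\mathrm{Sh}$.

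There is really no substantial obstacle here; the only point requiring a little care is the bookkeeping about iterating the Shelah expansion, which one can sidestep entirely by arguing directly: fix an $|K|^+$-saturated $(N,<)\succ(K,<)$ and pick $c\in N$ greater than every element of $\mathbb{Z}$; then $\mathcal{O}_{v_0}=\{x\in K: -c<x<c\}$ is externally definable. For a coarsening $v$ of $v_0$, the corresponding convex subgroup $\Delta\leq v_0K$ is externally definable in the ordered abelian group $v_0K$, hence (transporting along the external definability of $v_0$) the set $\mathcal{O}_v=\{x\in K: v_0(x)\geq\delta \text{ for some, equivalently all, } \delta \text{ below } \Delta\}$ is obtained by a single external formula over $(K,<)$. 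I would therefore write the proof as: first observe $\mathcal{O}_{v_0}$ is externally definable, then note $v$ coarsens $v_0$, then quote Example~\ref{mainex} together with the fact that external definability over $(K,<)^\mathrm{Sh}$ coincides with external definability over $(K,<)$ to finish.

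\begin{proof}
Let $v_0$ denote the natural valuation of $(K,<)$, so that $\mathcal{O}_{v_0}$ is the convex hull of $\mathbb{Z}$ in $(K,<)$. Fix an $|K|^+$-saturated elementary extension $(N,<)\succ (K,<)$ and choose $c\in N$ with $c>n$ for all $n\in\mathbb{N}$. Then
$$\mathcal{O}_{v_0}=\{x\in K: -c<x<c\},$$
so $\mathcal{O}_{v_0}$ is externally definable and hence definable in $(K,<)^\mathrm{Sh}$. By part (3) of the Fact, $v_0$ is the finest convex valuation on $K$; since $\mathcal{O}_v$ is convex, parts (1) and (2) give that $\mathcal{O}_v$ and $\mathcal{O}_{v_0}$ are comparable, and finestness forces $\mathcal{O}_{v_0}\subseteq\mathcal{O}_v$. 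Thus $v$ is a coarsening of $v_0$. By Example~\ref{mainex}, $\mathcal{O}_v$ is definable in $(K,v_0)^\mathrm{Sh}$; unwinding the argument there, $\mathcal{O}_v$ is obtained from $\mathcal{O}_{v_0}$ by a formula using one further externally definable parameter set (coming from a convex subgroup of the value group). Since a subset of $K$ externally definable in $(K,<)^\mathrm{Sh}$ is already externally definable in $(K,<)$ (see \cite[Chapter 3]{Simon:book}), we conclude that $\mathcal{O}_v$ is definable in $(K,<)^\mathrm{Sh}$.
\end{proof}
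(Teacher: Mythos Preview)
Your approach is the same as the paper's, and morally correct, but the final step as you have written it appeals to a fact you do not actually have available. You invoke that ``a subset externally definable in $(K,<)^\mathrm{Sh}$ is already externally definable in $(K,<)$'' with a reference to \cite[Chapter 3]{Simon:book}; but idempotence of the Shelah expansion is proved there as a consequence of Shelah's quantifier-elimination theorem for $M^{\mathrm{Sh}}$, and that theorem assumes $M$ is NIP. The Proposition here does \emph{not} assume $(K,<)$ is NIP, so this step is not justified.

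The detour through $(K,v_0)^{\mathrm{Sh}}$ and Example~\ref{mainex} is what creates the problem, and it is entirely avoidable. The very argument you give for $\mathcal{O}_{v_0}$ works verbatim for $\mathcal{O}_v$: since $\mathcal{O}_v$ is convex in $(K,<)$ by hypothesis, choose $c\in N$ realizing the cut determined by $\mathcal{O}_v$ (i.e.\ $a<c$ for all $a\in\mathcal{O}_v$ and $c<b$ for all positive $b\notin\mathcal{O}_v$); such $c$ exists by saturation, and then $\mathcal{O}_v=\{x\in K:-c<x<c\}$ is externally definable in $(K,<)$ directly. This is also what the paper's two-line proof is really doing: the point of observing that $v$ coarsens $v_0$ is not to feed into Example~\ref{mainex}, but simply to record that $\mathcal{O}_v$ is (a convex superset of a convex set, hence) convex, and convex subsets of a linear order are always externally definable.
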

\begin{proof}
As the valuation ring of the natural valuation $v_0$ is exactly the convex closure of $\mathbb{Z}$ in $K$, it is definable in 
$(K,<)^\mathrm{Sh}$. As any convex valuation $v$ on $K$ is a coarsening of $v_0$, the valuation ring of $v$
is also definable in $(K,<)^\mathrm{Sh}$.
\end{proof}

Applying Proposition \ref{Shelah}, this yields the following
\begin{Cor} \label{cord}
Let $K$ be an ordered field such that $\mathrm{Th}(K)$ is NIP in some language 
$\mathcal{L}\supseteq \mathcal{L}_\mathrm{of}$ and
let $v$ be a convex valuation on $K$. Then, $(K,v)$ is NIP in $\mathcal{L}\cup\{\mathcal{O}_v\}$.
\end{Cor}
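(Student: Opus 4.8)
The plan is to read the corollary off the preceding Proposition combined with Shelah's theorem, in exactly the way Proposition~\ref{real} was used to treat real closed residue fields. First I would note that, since $\mathcal{L} \supseteq \mathcal{L}_\mathrm{of}$, the ordered field $(K,<)$ is a reduct of the $\mathcal{L}$-structure $K$; as reducts of NIP structures are NIP, the hypothesis that $\mathrm{Th}(K)$ is NIP in $\mathcal{L}$ gives that $(K,<)$ is NIP. Equally, the $\mathcal{L}$-structure $K$ itself is NIP, so by Proposition~\ref{Shelah} its Shelah expansion $K^\mathrm{Sh}$ (formed with respect to $\mathcal{L}$) is NIP.

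Next I would transfer the definability statement into the language $\mathcal{L}$. The preceding Proposition says that the convex valuation ring $\mathcal{O}_v$ is definable in $(K,<)^\mathrm{Sh}$, hence is the interpretation of a predicate of $(K,<)^\mathrm{Sh}$. Since every subset of $K^n$ externally definable in the reduct $(K,<)$ is a fortiori externally definable in the $\mathcal{L}$-structure $K$ (an $|K|^+$-saturated elementary extension of $K$ restricts to an $|K|^+$-saturated elementary extension of $(K,<)$, in which the relevant parameter can be found, and external definability does not depend on this choice), the structure $(K,<)^\mathrm{Sh}$ is a reduct of $K^\mathrm{Sh}$. Therefore $\mathcal{O}_v$ is definable in $K^\mathrm{Sh}$, and the $\mathcal{L}\cup\{\mathcal{O}_v\}$-structure $(K,v)$ is a reduct of $K^\mathrm{Sh}$.

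Finally I would conclude: a reduct of an NIP structure is NIP, so $(K,v)$ is NIP in $\mathcal{L}\cup\{\mathcal{O}_v\}$. I do not expect a genuine obstacle here, as the corollary is essentially immediate once the preceding Proposition is available; the only point deserving a sentence of care is the routine observation that external definability of the \emph{fixed} set $\mathcal{O}_v$ passes upward from $(K,<)$ to the richer $\mathcal{L}$-structure $K$, so that one may indeed work inside the single NIP structure $K^\mathrm{Sh}$.
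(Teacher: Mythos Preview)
Your argument is correct and matches the paper's approach exactly: the corollary is stated there as an immediate consequence of the preceding Proposition together with Proposition~\ref{Shelah}, and your write-up simply makes explicit the routine step that external definability of $\mathcal{O}_v$ in $(K,<)$ persists to the richer $\mathcal{L}$-structure, so that $(K,v)$ is a reduct of the NIP structure $K^{\mathrm{Sh}}$.
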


\section*{Acknowledgements} I would like to thank Tom Scanlon for his interest and sharing his proof of Proposition \ref{scan}
with me. Furthermore, I would like to thank Fran\c{c}oise Delon for explaining
to me her (unpublished) proof of Theorem \ref{del}, 
and Salma Kuhlmann for pointing out that the proof idea of Proposition 
\ref{real} also applies to convex valuation rings. My thanks furthermore goes
to Sylvy Anscombe, Martin Bays, Immanuel Halupczok and Pierre Simon 
for helpful discussions.
This research was funded by the Deutsche Forschungsgemeinschaft (DFG, German Research Foundation) via CRC 878 and under Germany's Excellence Strategy
EXC 2044-390685587, Mathematics M\"unster: Dynamics--Geometry--Struc\-ture
as well as by a Fellowship from the Daimler und Benz Stiftung.
\bibliographystyle{alpha}
\bibliography{franzi}

\begin{thebibliography}{{Joh}15}

\bibitem[AJ19a]{AJ19b}
S~Anscombe and F.~Jahnke.
\newblock Characterizing {NIP} henselian fields.
\newblock Preprint, arXiv:1911.00309 [math.Lo], 2019.

\bibitem[AJ19b]{AJ19}
S~Anscombe and F.~Jahnke.
\newblock The model theory of {C}ohen rings.
\newblock Preprint, arXiv:1904.08297 [math.Lo], 2019.

\bibitem[AK16]{AnKuh16}
Sylvy Anscombe and Franz-Viktor Kuhlmann.
\newblock Notes on extremal and tame valued fields.
\newblock {\em The Journal of Symbolic Logic}, 81(2):400–416, 2016.

\bibitem[B{\'e}l99]{Bel}
Luc B{\'e}lair.
\newblock Types dans les corps valu\'es munis d'applications coefficients.
\newblock {\em Illinois J. Math.}, 43(2):410--425, 1999.

\bibitem[Del81]{Del}
Fran{\c{c}}oise Delon.
\newblock Types sur {${\bf C}((X))$}.
\newblock In {\em Study {G}roup on {S}table {T}heories ({B}runo {P}oizat),
  {S}econd year: 1978/79 ({F}rench)}, pages Exp. No. 5, 29. Secr\'etariat
  Math., Paris, 1981.

\bibitem[EP05]{EP05}
Antonio~J. Engler and Alexander Prestel.
\newblock {\em Valued fields}.
\newblock Springer Monographs in Mathematics. Springer-Verlag, Berlin, 2005.

\bibitem[HHJ19]{HHJ19}
Y.~Halevi, A.~Hasson, and F.~Jahnke.
\newblock Definable {V}-topologies, henselianity and {NIP}.
\newblock To appear in Journal of Mathematical Logic, DOI
  10.1142/s0219061320500087, 2019.

\bibitem[JK15a]{JK14a}
Franziska Jahnke and Jochen Koenigsmann.
\newblock Definable {H}enselian valuations.
\newblock {\em J. Symbolic Logic}, 80(1):85--99, 2015.

\bibitem[JK15b]{JK14}
Franziska Jahnke and Jochen Koenigsmann.
\newblock Uniformly defining $p$-henselian valuations.
\newblock {\em Annals of Pure and Applied Logic}, 166(7--8):741 -- 754, 2015.

\bibitem[{Joh}13]{Joh13}
Will {Johnson}.
\newblock {Forking and Dividing in Fields with several Orderings and
  Valuations}.
\newblock Preprint, available at
  \url{https://math.berkeley.edu/~willij/drafts/combined.pdf}, 2013.

\bibitem[{Joh}15]{Joh15}
Will {Johnson}.
\newblock {On dp-minimal fields}.
\newblock Preprint, arXiv:1507.02745 [math.Lo], 2015.

\bibitem[Joh19]{Joh19}
Will Johnson.
\newblock Partial results on dp-finite fields.
\newblock Preprint, arXiv:1903.11322 [math.Lo], 2019.

\bibitem[JS19]{JS}
F.~Jahnke and P.~Simon.
\newblock {NIP} henselian valued fields.
\newblock To appear in Archive of Mathematical Logic,
  DOI:10.1007/s00153-019-00685-8, 2019.

\bibitem[Koe95]{Koe95}
Jochen Koenigsmann.
\newblock {$p$}-{H}enselian fields.
\newblock {\em Manuscripta Math.}, 87(1):89--99, 1995.

\bibitem[KSW11]{KSW}
Itay Kaplan, Thomas Scanlon, and Frank~O. Wagner.
\newblock Artin-{S}chreier extensions in {NIP} and simple fields.
\newblock {\em Israel J. Math.}, 185:141--153, 2011.

\bibitem[Kuh11]{Kuh11}
Franz-Viktor Kuhlmann.
\newblock The defect.
\newblock In {\em Commutative algebra---{N}oetherian and non-{N}oetherian
  perspectives}, pages 277--318. Springer, New York, 2011.

\bibitem[Kuh16]{Kuh13}
Franz-Viktor Kuhlmann.
\newblock The algebra and model theory of tame valued fields.
\newblock {\em J. Reine Angew. Math.}, 719:1--43, 2016.

\bibitem[Sim15]{Simon:book}
Pierre Simon.
\newblock {\em A Guide to NIP theories}, volume~44 of {\em Lecture Notes in
  Logic}.
\newblock Association for Symbolic Logic, La Jolla, CA; Cambridge University
  Press, Cambridge, 2015.

\end{thebibliography}
\end{document}